\newtheorem{theorem}{Theorem}
\newtheorem{lemma}[theorem]{Lemma}
\newtheorem{corollary}[theorem]{Corollary}
\theoremstyle{definition}
\newtheorem{definition}[theorem]{Definition}
\begin{document}
\title {Jordan Triple Elementary Maps on Rings}
\author{Wu Jing}
\address{Department of Mathematics and Computer Science, Fayetteville State University,  Fayetteville, NC 28301}
 \email{wjing@uncfsu.edu}

\subjclass{16W99; 47B49; 47L10}
\date{May 25, 2007}
\keywords{Jordan triple elementary maps; rings; prime rings; standard
operator algebras; additivity}
\begin{abstract}
We prove that Jordan triple elementary surjective maps on unital rings containing a nontrivial idempotent are automatically additive. \end{abstract}
\maketitle

The first result about the additivity of maps on rings was given by Martindale III in an  excellent paper \cite {ma695}. He established a condition on a ring $\mathcal {R}$ such that every multiplicative bijective map on $\mathcal {R}$ is additive. More precisely, he proved the following theorem.
\begin{theorem}(\cite{ma695}) Let $\mathcal {R}$ be a ring containing a family $\{ e_{\alpha }:\alpha \in \Lambda \} $ of idempotents which satisfies:

(i) $x\mathcal {R}=\{ 0\} $ implies $x=0$;

(ii) If $e_{\alpha }\mathcal {R}x=\{ 0\} $ for each $\alpha \in \Lambda $, then $x=0$;

(iii) For each $\alpha \in \Lambda $, $e_{\alpha }xe_{\alpha }\mathcal {R}(1-e_{\alpha })=\{ 0\} $ implies $e_{\alpha }xe_{\alpha }=0$.

Then any multiplicative bijective map from $\mathcal {R}$ onto an arbitrary ring $\mathcal {R}^{\prime }$ is additive.
\end{theorem}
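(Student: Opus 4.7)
The strategy follows the classical Peirce-decomposition playbook. Fix an idempotent $e = e_\alpha$ from the prescribed family, write $e_1 = e$, and use the two-sided Peirce pieces $\mathcal{R}_{ij} = e_i \mathcal{R} e_j$ (where multiplication by ``$e_2$'' is shorthand for the projection $r \mapsto r - er$ on the left and $r \mapsto r - re$ on the right, avoiding any appeal to a unit). Given arbitrary $a,b \in \mathcal{R}$, surjectivity produces $t \in \mathcal{R}$ with $\phi(t) = \phi(a) + \phi(b)$; the entire task is to prove $t = a+b$. Before the case analysis, I would record the routine preliminaries: $\phi(0) = 0$ (from $\phi(0)\phi(r) = \phi(0)$ together with bijectivity), and the two multiplicative ``transport'' identities $\phi(tr) = \phi(ar) + \phi(br)$ and $\phi(rt) = \phi(ra) + \phi(rb)$ for every $r \in \mathcal{R}$. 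These are the only consequences of multiplicativity we will ever use.

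The core of the proof decomposes $a$ and $b$ into Peirce components and proceeds in four layers. \emph{Layer 1:} additivity on the off-diagonal pieces $\mathcal{R}_{12}$ and $\mathcal{R}_{21}$, handled by the identity $(e_1 + a_{12})(e_2 + b_{12}) = a_{12} + b_{12}$ combined with multiplicativity. \emph{Layer 2:} cross-component additivity, i.e.\ $\phi(a_{ij} + b_{k\ell}) = \phi(a_{ij}) + \phi(b_{k\ell})$ when $(i,j) \ne (k,\ell)$, obtained by multiplying the transport identities by $e_\alpha$ on appropriate sides and invoking (ii) to separate the four Peirce slots as $\alpha$ runs through $\Lambda$. \emph{Layer 3:} additivity on the diagonal pieces $\mathcal{R}_{11}$ and $\mathcal{R}_{22}$; this is where all the weight lies. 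Sandwiching the equation $\phi(t) - \phi(a) - \phi(b) = 0$ between $\phi(e)$ and $\phi(e_\alpha r e_2)$ and translating back through $\phi^{-1}$ using Layers 1--2 yields, after rearrangement, an identity of the form $e\,(t - a - b)\,e \cdot \mathcal{R}(1-e) = \{0\}$, at which point hypothesis (iii) forces the diagonal $(1,1)$-component of $t - a - b$ to vanish; the $(2,2)$-component is handled symmetrically. \emph{Layer 4:} assembling the pieces, each Peirce component of $t - a - b$ is shown to annihilate $e_\alpha \mathcal{R}$ for every $\alpha$, and then (i) upgrades this to $t - a - b = 0$.

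The main obstacle is Layer 3: one must find the right combination of left/right multiplications so that, after unwinding through $\phi^{-1}$ via the already-established additivity of Layers 1 and 2, the residue is exactly of the shape $e_\alpha x e_\alpha \cdot \mathcal{R}(1-e_\alpha) = \{0\}$ demanded by hypothesis (iii). Layers 1, 2, and 4 are bookkeeping, but they are genuinely needed as prerequisites --- without them the translation in Layer 3 cannot be carried out, since one cannot distribute $\phi$ over sums of mixed Peirce type. All three hypotheses (i)--(iii) therefore enter the argument in a tightly coordinated way, each doing precisely one job that the others cannot.
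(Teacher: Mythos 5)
First, a remark on context: the paper does not prove this theorem at all --- it is quoted from Martindale's 1969 paper purely as motivation --- so the only meaningful comparison is with Martindale's original argument and with the machinery this paper builds for its own main theorem. Your general playbook (Peirce decomposition relative to $e_\alpha$, transport identities from multiplicativity, componentwise additivity, annihilation hypotheses to kill the error term) is the right one and is indeed the one Martindale invented.

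The genuine gap is in Layer 3 and, consequently, Layer 4. Hypotheses (i)--(iii) are deliberately asymmetric: (iii) controls only $e_\alpha x e_\alpha$, via right annihilation of $\mathcal{R}(1-e_\alpha)$, and there is no dual statement for $(1-e_\alpha)x(1-e_\alpha)$. So your claim that the $(2,2)$-component is ``handled symmetrically'' is unsupported: the mirror image of your $\mathcal{R}_{11}$ argument terminates in a condition of the form $e_\alpha\mathcal{R}\cdot(1-e_\alpha)d(1-e_\alpha)=\{0\}$ for the single fixed $\alpha$, and none of (i)--(iii) lets you conclude $(1-e_\alpha)d(1-e_\alpha)=0$ from that (hypothesis (ii) only bites when quantified over all $\alpha\in\Lambda$, which your fixed-$\alpha$ setup cannot supply). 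Martindale's proof avoids the problem entirely: it never establishes additivity on $\mathcal{R}_{21}$ or $\mathcal{R}_{22}$. It proves additivity only on the first row $e_\alpha\mathcal{R}=\mathcal{R}_{11}\oplus\mathcal{R}_{12}$ for each $\alpha$, then for arbitrary $a,b$ and $t$ with $\phi(t)=\phi(a)+\phi(b)$ computes $\phi(e_\alpha rt)=\phi(e_\alpha ra)+\phi(e_\alpha rb)=\phi(e_\alpha r(a+b))$ and concludes $e_\alpha\mathcal{R}(t-a-b)=\{0\}$ for every $\alpha$, at which point hypothesis (ii) --- not (i), as your Layer 4 states --- finishes. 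Your four-component architecture is essentially the one this paper runs for its own Theorem 2, but that theorem assumes a unital ring and the symmetric condition (P); it cannot be executed under Martindale's weaker hypotheses. Two smaller defects: the identity $(e_1+a_{12})(e_2+b_{12})=a_{12}+b_{12}$ uses $e_2=1-e_1$ as an element of the ring, contradicting your stated intention to avoid a unit (Martindale assumes none); and Layer 1 as written already requires the mixed additivity of Layer 2 in order to expand $\phi(e_1+a_{12})$ and $\phi(e_2+b_{12})$, so the layers are circular in the order you give them.
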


As a corollary, every multiplicative bijective map from a prime ring containing a nontrivial idempotent onto an arbitrary ring is necessarily additive.

During the last decade, many mathematicians devoted to study the additivity of maps on rings as well as operator algebras.  In this paper we continue to investigate the additivity of Jordan triple elementary maps on rings.

   We first define Jordan triple
elementary maps as follows.
\begin{definition}
Let $\mathcal {R}$ and $\mathcal {R}^{\prime}$ be two rings, and let
$M\colon \mathcal {R}\rightarrow \mathcal {R}^{\prime}$ and
$M^*\colon \mathcal {R}^{\prime}\rightarrow \mathcal {R}$ be two
maps.  Call the ordered pair $(M, M^*)$ a \textit {Jordan triple elementary
map} of $\mathcal {R}\times \mathcal {R}^{\prime}$ if
\begin{displaymath}
 \left\{ \begin{array}{ll}
 M(aM^*(x)b+bM^*(x)a)=M(a)xM(b)+M(b)xM(a),\\
 M^*(xM(a)y+yM(a)x)=M^*(x)aM^*(y)+M^*(y)aM^*(x)
 \end{array}\right.
\end{displaymath}
 for all $a, b\in \mathcal {R}$ and $x, y\in \mathcal {R}^{\prime}$.
\end{definition}

Let us now introduce some definitions and results.  Let $\mathcal {R}$
be a ring, if $a\mathcal {R}b=\{ 0\} $ implies either $a=0$ or
$b=0$, then $\mathcal {R}$ is called a \textit{prime} ring. Ring $\mathcal {R}$ is said to be \textit {$2$-torsion free} if $2a=0$ implies $a=0$.

Suppose that $\mathcal {R}$ is a ring containing  a nontrivial
idempotent $e_1$. Let $e_2=1-e_1$.  Note that $\mathcal {R}$ need not have an identity element. We set $\mathcal {R}_{ij}=e_i\mathcal
{R}e_j$, for $i, j=1, 2$. Then we may write $\mathcal {R}=\mathcal
{R} _{11}\oplus \mathcal {R} _{12}\oplus \mathcal {R}_{21}\oplus
\mathcal {R} _{22}$. It should be mentioned here that this
significant idea is due to Martinadale (\cite {ma695}) which has become a
key tool in dealing with a large number of maps on some rings and
algebras. In what follows, $a_{ij}$ will denote that  $a_{ij}\in
\mathcal {R}_{ij}$ ($1\leq i, j\leq 2$).

We denote by $B(X)$ the algebra of all linear bounded operators on a
Banach space $X$. A subalgebra of
$B(X)$ is called a \textit {standard operator algebra} if it
contains all finite rank operators in $B(X)$.

Let's state the  main result of this paper.
\begin{theorem}\label{theorem}
Let $\mathcal {R}$ and $\mathcal {R}^{\prime }$ be two rings.
Suppose that $\mathcal {R}$ is a $2$-torsion free unital ring containing   a
nontrivial idempotent $e_1$ and satisfies

(P) $e_iae_j\mathcal {R}e_k=\{ 0\} $, or $e_k\mathcal {R}e_iae_j=\{
0\} $ implies $e_iae_j=0$ ($1\leq i, j, k\leq 2$), where
$e_2=1-e_1$;

Suppose that $M\colon {\mathcal R}\rightarrow {\mathcal R}^{\prime
}$ and $M^*\colon {\mathcal R}^{\prime }\rightarrow {\mathcal R}$
are surjective maps such that
\begin{displaymath}
 \left\{ \begin{array}{ll}
  M(aM^*(x)b+bM^*(x)a)=M(a)xM(b)+M(b)xM(a),\\
 M^*(xM(a)y+yM(a)x)=M^*(x)aM^*(y)+M^*(y)aM^*(x)
 \end{array}\right.
\end{displaymath}
 for all $a, b\in \mathcal {R}$ and $x, y\in \mathcal {R}^{\prime}$. Then both $M$ and $M^*$ are additive. \end{theorem}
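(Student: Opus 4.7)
The proof follows the Peirce-decomposition strategy pioneered by Martindale \cite{ma695} and adapted to the Jordan-triple setting. Because the two defining relations are symmetric in $(M,M^{*})$, it suffices to carry out the argument for $M$; interchanging the roles of $\mathcal{R}$ and $\mathcal{R}'$ (and noting that surjectivity and the Peirce hypotheses are preserved once the block structure has been transported across) yields additivity of $M^{*}$ by the identical scheme. I therefore focus on $M$.

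The first task is to normalize and to install a Peirce grid on the image side. Using surjectivity of $M^{*}$, pick $u_{0}\in\mathcal{R}'$ with $M^{*}(u_{0})=0$; specializing the first identity to $a=b=0$ and then to $a=b$ arbitrary, and using surjectivity of $M$ together with property (P) to kill the resulting $2M(a)u_{0}M(a)$ terms, gives $M(0)=0$, and dually $M^{*}(0)=0$. Next set $f_{i}=M(e_{i})$, choose via surjectivity of $M^{*}$ preimages $x_{i}\in\mathcal{R}'$ with $M^{*}(x_{i})=e_{i}$, and substitute pairs $(a,b)$ drawn from $\{e_{1},e_{2}\}$ in the first equation. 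Together with (P) this forces $f_{1},f_{2}$ to behave as complementary Peirce projectors of $\mathcal{R}'$, defines blocks $\mathcal{R}'_{ij}$, and shows that $M$ sends $\mathcal{R}_{ij}$ into $\mathcal{R}'_{ij}$ (after suitable relabeling). Similarly $M^{*}$ respects the matching Peirce decomposition on $\mathcal{R}$.

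With the block structure in place, additivity is established in two stages. \emph{Intra-block additivity}: for $a_{ij},b_{ij}\in\mathcal{R}_{ij}$, choose auxiliaries $c,d\in\mathcal{R}$ and $x\in\mathcal{R}'$ so that the Jordan triple $cM^{*}(x)d+dM^{*}(x)c$ collapses to $a_{ij}+b_{ij}$ (using surjectivity of $M^{*}$ to produce idempotent sandwich elements); applying the first identity and then (P) yields $M(a_{ij}+b_{ij})=M(a_{ij})+M(b_{ij})$. \emph{Inter-block additivity}: for $a_{ij}\in\mathcal{R}_{ij}$ and $b_{k\ell}\in\mathcal{R}_{k\ell}$ with $(i,j)\neq(k,\ell)$, work through the cases according to which indices coincide. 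In each case, substituting elements such as $e_{m}+a_{ij}$ or $e_{m}$ in place of $a,b$, and preimages of $f_{n}$ in place of $x$, turns the Jordan triple on the left into the sought-after sum and lets one read off $M(a_{ij}+b_{k\ell})=M(a_{ij})+M(b_{k\ell})$ after canceling off-diagonal contributions via (P). Finally, Peirce-decompose arbitrary $a,b\in\mathcal{R}$ as $a=\sum a_{ij}$, $b=\sum b_{ij}$; the $2$-torsion-freeness of $\mathcal{R}$ is used here to divide out the factor of $2$ coming from the symmetrized triple $cM^{*}(x)c+cM^{*}(x)c$, and the intra- and inter-block identities combine to give $M(a+b)=M(a)+M(b)$.

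The principal obstacle, as compared with the purely multiplicative case treated by Martindale, is that the Jordan triple $aM^{*}(x)b+bM^{*}(x)a$ is bilinear-symmetric in $(a,b)$, so one cannot directly isolate a single product $M(a)xM(b)$. The fix is to feed into the identities carefully chosen idempotent-valued preimages under $M^{*}$, and this is precisely where the surjectivity of \emph{both} $M$ and $M^{*}$ becomes indispensable. I expect the most delicate step to be inter-block additivity involving the diagonal corners $\mathcal{R}_{11}$ and $\mathcal{R}_{22}$, where several applications of (P) (in both its left- and right-annihilator forms, which is why the hypothesis is stated symmetrically) will be needed to dispose of cross terms before the desired equation emerges.
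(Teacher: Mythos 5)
There is a genuine gap, and it sits at the center of your plan. You propose to ``install a Peirce grid on the image side'' by setting $f_i=M(e_i)$ and arguing that $f_1,f_2$ behave as complementary Peirce projectors of $\mathcal{R}'$, with $M(\mathcal{R}_{ij})\subseteq\mathcal{R}'_{ij}$. Nothing in the hypotheses supports this: $\mathcal{R}'$ is an arbitrary ring, $M$ is not assumed multiplicative, and the defining identity $M(aM^*(x)b+bM^*(x)a)=M(a)xM(b)+M(b)xM(a)$ always has an $x$ sandwiched between the images, so it never produces a relation like $M(e_1)^2=M(e_1)$ or a block structure on $\mathcal{R}'$. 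The same defect undermines your opening reduction: although the two displayed identities are formally symmetric in $(M,M^*)$, the hypotheses are not --- only $\mathcal{R}$ is assumed to carry the idempotent and property (P) --- so you cannot ``interchange the roles of $\mathcal{R}$ and $\mathcal{R}'$'' to get additivity of $M^*$ by the same scheme. The paper avoids both problems by never leaving $\mathcal{R}$: it first proves that $M$ and $M^*$ are \emph{injective} (a step your proposal omits entirely), observes that $(M^{*^{-1}},M^{-1})$ is again a Jordan triple elementary pair, and then establishes the workhorse identity
\begin{equation*}
M(c)=M(a)+M(b)\ \Longrightarrow\ M^{*^{-1}}(tcs+sct)=M^{*^{-1}}(tas+sat)+M^{*^{-1}}(tbs+sbt)\quad(t,s\in\mathcal{R}),
\end{equation*}
which, by injectivity of $M^{*^{-1}}$, converts the hypothetical equation $M(c)=M(a)+M(b)$ into honest equations $tcs+sct=tas+sat+tbs+sbt$ \emph{inside} $\mathcal{R}$, where the Peirce decomposition and (P) identify the components of $c$ one block at a time. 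Additivity of $M^*$ is then deduced at the very end from the already-proved additivity of $M$, again by comparing Peirce components of $M^*(x)+M^*(y)$ and $M^*(x+y)$ in $\mathcal{R}$.

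Your secondary steps inherit this problem: the ``intra-block'' claim that one can always realize $a_{ij}+b_{ij}$ directly as a Jordan triple $cM^*(x)d+dM^*(x)c$ is only available for special combinations (the paper uses $a_{12}+b_{12}c_{22}=(e_1+b_{12})(a_{12}+c_{22})e_2+e_2(a_{12}+c_{22})(e_1+b_{12})$, and even that requires prior partial additivity on $\mathcal{R}_{ii}\oplus\mathcal{R}_{ij}$); the general case $a_{12}+b_{12}$ is handled by the surjectivity-plus-identification argument, not by such a factorization. Also, $M(0)=0$ follows from the single substitution $a=b=0$, $x=0$; no appeal to (P) or surjectivity is needed there. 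To repair the proof you should discard the image-side Peirce grid, add the injectivity lemma and the inverse-pair lemma, and rebuild the block-by-block analysis entirely within $\mathcal{R}$.
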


To prove this theorem, let's introduce  a series of lemmas.  We
begin with

\begin{lemma} $M(0)=0$ and $M^*(0)=0$.
\end{lemma}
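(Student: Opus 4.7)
The plan is to obtain both normalizations by a single direct substitution exploiting the shape of the defining equations, with no appeal to the structural hypotheses on $\mathcal{R}$. For $M(0)=0$, I will substitute $a=b=0$ and $x=0$ into the first identity. On the left-hand side, the inner expression $aM^*(x)b+bM^*(x)a$ vanishes because each summand carries a factor of $a$ or $b$, so the LHS collapses to $M(0)$. On the right-hand side, each of the two summands $M(a)xM(b)$ and $M(b)xM(a)$ contains $x=0$ as its middle factor, so the RHS equals $0$. Hence $M(0)=0$.

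A symmetric substitution $x=y=0$ together with $a=0$ in the second identity yields $M^*(0)=0$ by the same reasoning: the inner expression $xM(a)y+yM(a)x$ vanishes factor-wise, and each summand on the right has $a=0$ sandwiched in the middle. Alternatively, once $M(0)=0$ has been established, setting only $a=0$ in the second identity forces the RHS to be $0$ while the LHS becomes $M^*(0)$, again giving $M^*(0)=0$.

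I expect no genuine obstacle here. The lemma does not require the surjectivity of $M$ and $M^*$, the nontrivial idempotent $e_1$, the $2$-torsion-free hypothesis, or condition (P); it is purely a consistency check that the two maps are well-normalized at zero, and it will serve as the base point for the substantive additivity arguments to follow, where these additional hypotheses become essential.
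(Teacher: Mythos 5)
Your proof is correct and is essentially identical to the paper's: both substitute $a=b=0$, $x=0$ into the first identity (and the symmetric choice into the second), noting that the inner expression vanishes while the right-hand side is killed by the zero middle factor. Your remark that none of the structural hypotheses are needed here is also accurate.
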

\begin{proof} We have
$M(0)=M(0M^*(0)0+0M^*(0)0)=M(0)0M(0)+M(0)0M^*(0)=0$.

Similarly, $M^*(0)=M^*(0M(0)0+0M(0)0)=M^*(0)0M^*(0)+M^*(0)0M^*(0)=0$.
\end{proof}

The proof of the following lemma is straightforward.

\begin{lemma}\label{lu}
Let $a=a_{11}+a_{12}+a_{21}+a_{22}\in \mathcal {R}$.

(i) If $a_{ij}t_{jk}=0$ for each $t_{jk}\in \mathcal {R}_{jk} $
($1\leq i, j, k\leq 2$), then $a_{ij}=0$.

Dually, if $t_{ki}a_{ij}=0$ for each  $t_{ki}\in \mathcal {R}_{ki} $
($1\leq i, j, k\leq 2$), then $a_{ij}=0$.

(ii) If $a_{ij}t+ta_{ij}\in \mathcal {R}_{ij} $ for every $a_{ij}\in
\mathcal {R}_{ij}$ ($1\leq i, j\leq 2$), then $t_{ji}=0$

 (iii) If $a_{ii}t+ta_{ii}=0$ for every $a_{ii}\in \mathcal {R}_{ii}$ ($i=1, 2$), then $a_{ii}=0$;

(iv) If $a_{jj}t+ta_{jj}\in \mathcal{R}_{ij}$ for every $a_{jj}\in
\mathcal {R}_{jj}$ ($1\leq i\not =j\leq j$), then $t_{ji}=0$ and
$a_{jj}=0$.

  Dually, if  $a_{jj}t+ta_{jj}\in \mathcal{R}_{ji}$ for every $a_{jj}\in \mathcal {R}_{jj}$ ($1\leq i\not =j\leq j$), then $s_{ji}=0$ and $a_{jj}=0$.

\end{lemma}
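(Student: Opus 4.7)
The overall plan is to combine two tools: condition~(P) from Theorem~\ref{theorem}, which controls when corner elements $e_iae_j$ must vanish, and the Peirce decomposition of an arbitrary element $t\in\mathcal{R}$ relative to the idempotent $e_1$, used alongside the unital structure and the $2$-torsion-free hypothesis.

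For part~(i), I would simply unwind $\mathcal{R}_{jk}=e_j\mathcal{R}e_k$: the assumption that $a_{ij}t_{jk}=0$ for every $t_{jk}\in\mathcal{R}_{jk}$ is equivalent to $e_ia_{ij}e_j\cdot\mathcal{R}\cdot e_k=\{0\}$, and condition~(P) immediately forces $a_{ij}=0$. The dual statement follows from the other clause of~(P).

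For parts~(ii)--(iv), I would expand $t=t_{11}+t_{12}+t_{21}+t_{22}$ and compute the products corner by corner. Because $a_{ij}=e_ia_{ij}e_j$, left-multiplying by $a_{ij}$ annihilates every Peirce component of $t$ whose first index differs from $j$, while right-multiplying annihilates every component whose second index differs from $i$; only two terms survive in $a_{ij}t$ and two in $ta_{ij}$. Matching those survivors against the prescribed corner (a fixed $\mathcal{R}_{ij}$ in~(ii) and~(iv), or $0$ in~(iii)) produces two types of residual equations: rectangular identities such as $a_{ij}t_{pq}=0$ or $t_{pq}a_{ij}=0$ for every $a_{ij}$, which are handled by part~(i); and diagonal identities of the form $a_{ii}s+sa_{ii}=0$ for every $a_{ii}\in\mathcal{R}_{ii}$, which I would kill by specializing $a_{ii}=e_i$ to obtain $2s=0$ and invoking the $2$-torsion-free hypothesis.

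The argument is essentially bookkeeping, and I do not foresee any serious obstacle; the only details to watch carefully are which half of~(P) (equivalently, which half of~(i)) one must invoke in each corner, and the need to specialize to $e_i$ rather than $e_j$ when extracting information from the diagonal relations appearing in~(iii) and~(iv).
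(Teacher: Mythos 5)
The paper gives no proof of this lemma at all (it is declared ``straightforward''), so there is nothing to compare against; your argument is correct and is clearly the intended one: part (i) is a direct restatement of condition (P) using $a_{ij}e_j=a_{ij}=e_ia_{ij}$, and parts (ii)--(iv) follow by Peirce-expanding $t$, noting that only the components $t_{ji},t_{jj},t_{ii},t_{ij}$ can survive in $a_{ij}t+ta_{ij}$, and disposing of the rectangular survivors via (i) and of the diagonal survivors by setting the quantified variable equal to $e_i$ (or $e_j$) and using $2$-torsion-freeness. One remark worth recording: the lemma as printed is garbled --- (ii) is literally false when $i=j$ (take $t=e_i$), and the conclusions of (iii) and (iv) name the quantified variable $a_{ii}$ (resp.\ $a_{jj}$) where they must mean $t_{ii}$ (resp.\ $t_{jj}$) --- and your proof implicitly supplies the corrected statements, which are the ones actually used later in the paper.
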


 \begin{lemma}
$M$ and $M^*$ are injective.
\end{lemma}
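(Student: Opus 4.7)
My plan is to prove injectivity of $M$ first, and then bootstrap the result to $M^*$ using the newly acquired bijectivity of $M$.

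For $M$: I will assume $M(a)=M(b)$. Substituting this common value into the right-hand side of the second defining identity with either $a$ or $b$ forces the two left-hand sides to agree, so $M^*(x)aM^*(y)+M^*(y)aM^*(x)=M^*(x)bM^*(y)+M^*(y)bM^*(x)$ for all $x,y\in\mathcal{R}'$. Surjectivity of $M^*$ then translates this to $r(a-b)s+s(a-b)r=0$ for all $r,s\in\mathcal{R}$. Taking $r=s$ and using $2$-torsion freeness gives $r(a-b)r=0$ for every $r$. The choices $r=e_1$ and $r=e_2$ kill the diagonal Peirce components of $a-b$; the choice $r=e_1+t_{12}$ with $t_{12}\in\mathcal{R}_{12}$, once expanded and sorted into Peirce components, isolates $t_{12}(a-b)_{21}=0$ for every $t_{12}$, which is exactly $e_1\mathcal{R}e_2\cdot e_2(a-b)e_1=\{0\}$. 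Condition (P) now forces $(a-b)_{21}=0$, and the symmetric choice $r=e_2+t_{21}$ handles $(a-b)_{12}$. Thus $a=b$ and $M$ is bijective.

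For $M^*$: I will assume $M^*(x)=M^*(y)$. The first defining identity, applied with $x$ and with $y$, has identical left-hand sides, so
\begin{displaymath}
M(a)xM(b)+M(b)xM(a)=M(a)yM(b)+M(b)yM(a)
\end{displaymath}
in $\mathcal{R}'$ for all $a,b\in\mathcal{R}$. Since $\mathcal{R}'$ carries no idempotent structure, I cannot attack this identity directly; instead I apply $M^*$ to both sides. Using the bijectivity of $M$ I write $x=M(c)$ and $y=M(d)$, so the second defining identity applies to each side and reduces the equality to
\begin{displaymath}
M^*(M(a))(c-d)M^*(M(b))+M^*(M(b))(c-d)M^*(M(a))=0
\end{displaymath}
for all $a,b\in\mathcal{R}$. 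The composition $M^*\circ M\colon\mathcal{R}\to\mathcal{R}$ is surjective (since $M$ is bijective and $M^*$ is surjective), so as $a,b$ range over $\mathcal{R}$ the elements $r=M^*(M(a))$ and $s=M^*(M(b))$ range over all of $\mathcal{R}$. Hence $r(c-d)s+s(c-d)r=0$ for every $r,s\in\mathcal{R}$, and the Peirce argument from the first half gives $c=d$. Injectivity of $M$ then yields $x=M(c)=M(d)=y$.

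The main obstacle is the asymmetry of the hypotheses: the idempotents and condition (P) live in $\mathcal{R}$ but not in $\mathcal{R}'$, so a direct symmetric argument for $M^*$ stalls inside $\mathcal{R}'$. The workaround is to use the bijectivity of $M$, already established in the first half, to transport the obstruction back into $\mathcal{R}$ via $M^*$, where the Peirce decomposition and property (P) become available again.
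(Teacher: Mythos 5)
Your proof is correct and follows essentially the same route as the paper: both arguments push the hypothesis $M(a)=M(b)$ (resp.\ $M^*(x)=M^*(y)$) into $\mathcal{R}$ via the surjectivity of $M^*$ (resp.\ of $M^*\circ M$, after $M$ is known to be bijective), arriving at an identity of the form $rus+sur=0$ that is then killed componentwise using the Peirce decomposition and condition (P). The only cosmetic difference is in that last step: you specialize $r=s$ (invoking $2$-torsion freeness) and perturb the idempotents via $r=e_1+t_{12}$, whereas the paper substitutes Peirce-homogeneous pairs $t_{ij},s_{kl}$ directly; both executions are valid.
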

\begin{proof} We first prove that $M$ is injective. Suppose that $M(a)=M(b)$ for $a, b\in\mathcal {R}$.
We write  $a=a_{11}+a_{12}+a_{21}+a_{22}$ and $b=b_{11}+b_{12}+b_{21}+b_{22}$.

For arbitrary $t_{ij}\in \mathcal {R}_{ij}$ and $s_{kl}\in \mathcal {R}_{kl}$ ($1\leq i, j, k, l\leq 2$),
there exist $x(i, j)\in \mathcal {R}^{\prime}$ and $y(k, l)\in \mathcal {R}^{\prime}$ such that
$M^*(x(i, j))=t_{ij}$  and
$M^*(y(k, l))=s_{kl}$ since $M^*$ is surjective.  We now compute
\begin{eqnarray*}
& &s_{kl}at_{ij}+t_{ij}as_{kl}\\
&=&M^*(y(k,l))aM^*(x(i,j))+M^*(x(i,j))aM^*(y(k,l))\\
&=&M^*(y(k,l)M(a)x(i,j)+x(i,j)M(a)y(k,l))\\
&=&M^*(y(k,l)M(b)x(i,j)+x(i,j)M(b)y(k,l))\\
&=&M^*(y(k,l))bM^*(x(i,j))+M^*(x(i,j))bM^*(y(k,l))\\
&=&s_{kl}bt_{ij}+t_{ij}bs_{kl}.
\end{eqnarray*}

 Letting $i=j=k=1$ and $l=2$, we get $s_{12}at_{11}+t_{11}as_{12}=s_{12}bt_{11}+t_{11}bs_{12}$, which implies that $a_{11}=b_{11}$ and $a_{21}=b_{21}$.

 Similarly, we can get $a_{22}=b_{22}$ and $a_{12}=b_{12}$ by considering $i=k=l=2$ and $j=1$. Therefore, we arrive at $a=b$.

To show that $M^*$ is injective,  let $x, y\in \mathcal
{R}^{\prime}$ such that $M^*(x)=M^*(y)$.
For any $t_{ij}\in \mathcal {R}_{ij}$ and $s_{kl}\in \mathcal {R}_{kl}$, by the surjectivity of
$M^*M$, there are  $c(i, j)\in \mathcal {R}$  and  $d(k, l)\in \mathcal {R}$ such that $M^*M(c(i,
j))=t_{ij}$ and $M^*M(d(k, l))=s_{kl}$.

We consider
\begin{eqnarray*}
& &t_{ij}M^{-1}(x)s_{kl}+s_{kl}M^{-1}(x)t_{ij}\\
&=&M^*M(c(i, j))M^{-1}(x)M^*M(d(k, l))+M^*M(d(k, l))M^{-1}(x)M^*M(c(i, j))\\
&=&M^*(M(c(i, j))xM(d(k, l))+M(d(k, l))xM(c(i, j)))\\
&=&M^*M(c(i, j)M^*(x)d(k, l)+d(k, l)M^*(x)c(i, j))\\
&=&M^*M(c(i, j)M^*(y)d(k, l)+d(k, l)M^*(y)c(i, j))\\
&=&M^*(M(c(i, j))yM(d(k, l))+M(d(k, l))yM(c(i, j)))\\
&=&M^*M(c(i, j))M^{-1}(y)M^*M(d(k, l))+M^*M(d(k, l))M^{-1}(y)M^*M(c(i, j))\\
&=&t_{ij}M^{-1}(x)s_{kl}+s_{kl}M^{-1}(x)t_{ij}.
\end{eqnarray*}

With the same argument used in showing the injectivity of $M$, one can easily get  $M^{-1}(x)=M^{-1}(y)$.
Therefore, $x=y$, this completes the proof.
\end{proof}

 \begin{lemma}\label{inverse}
The pair $(M^{*^{-1}}, M^{-1})$ is a Jordan elementary map on
$\mathcal {R}\times \mathcal {R}^{\prime}$. That is,
\begin{displaymath}
 \left\{ \begin{array}{ll}
 M^{*^{-1}}(aM^{-1}(x)b+bM^{-1}(x)a)=M^{*^{-1}}(a)xM^{*^{-1}}(b)+M^{*^{-1}}(b)xM^{*^{-1}}(a),\\
 M^{-1}(xM^{*^{-1}}(a)y+yM^{*^{-1}}(a)x)=M^{-1}(x)aM^{-1}(y)+M^{-1}(y)aM^{-1}(x)
\end{array}\right.
\end{displaymath}
for all $a, b\in \mathcal {R}$ and  $x, y\in \mathcal {R}^{\prime }$.
\end{lemma}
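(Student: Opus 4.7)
The plan is to verify the two claimed identities for $(M^{*-1}, M^{-1})$ by applying $M^*$ (respectively $M$) to both sides and reducing to the original Jordan triple elementary identities enjoyed by the pair $(M, M^*)$. The previous lemma guarantees that $M$ and $M^*$ are bijections, so $M^{-1}$ and $M^{*-1}$ are well-defined maps; the rest is a formal substitution.

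To establish the first identity, I would fix $a, b \in \mathcal{R}$ and $x \in \mathcal{R}'$ and introduce the convenient substitutions $u = M^{*-1}(a)$, $v = M^{*-1}(b)$, and $c = M^{-1}(x)$, so that $a = M^*(u)$, $b = M^*(v)$, and $x = M(c)$. Then I would apply $M^*$ to the conjectural right-hand side and rewrite:
\begin{eqnarray*}
M^*\bigl(M^{*-1}(a)\,x\,M^{*-1}(b) + M^{*-1}(b)\,x\,M^{*-1}(a)\bigr)
&=& M^*\bigl(u\,M(c)\,v + v\,M(c)\,u\bigr).
\end{eqnarray*}
The second defining identity for $(M, M^*)$, taken with $a \leftarrow c$, $x \leftarrow u$, $y \leftarrow v$, turns this into $M^*(u)\,c\,M^*(v) + M^*(v)\,c\,M^*(u) = a\,M^{-1}(x)\,b + b\,M^{-1}(x)\,a$. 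Applying the bijection $M^{*-1}$ then yields exactly the first equation of the lemma.

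The second identity is proved by the mirror-image argument: fix $a \in \mathcal{R}$ and $x, y \in \mathcal{R}'$, set $c = M^{-1}(x)$, $d = M^{-1}(y)$, $u = M^{*-1}(a)$, apply $M$ to the purported right-hand side, invoke the first defining identity for $(M, M^*)$ (with the roles of the variables relabelled appropriately), and then apply $M^{-1}$.

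There is no real obstacle: the only content is that the two defining equations of a Jordan triple elementary map are completely symmetric in the roles of $(M, x, y)$ and $(M^*, a, b)$, so passing to the pair of inverses simply swaps the two equations. No use of the idempotent, the torsion hypothesis, or condition (P) is needed here; this lemma is purely formal and its purpose is presumably to allow later arguments to exploit the symmetry between $M$ and $M^{*-1}$.
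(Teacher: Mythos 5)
Your argument is correct and is essentially identical to the paper's own proof: both apply $M^*$ to the right-hand side, rewrite $x$ as $MM^{-1}(x)$, invoke the second defining identity of $(M,M^*)$, and then apply $M^{*^{-1}}$, with the second equation handled by the symmetric argument. The substitutions $u,v,c$ are just a notational repackaging of the same computation.
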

\begin{proof}
From
\begin{eqnarray*}
& &M^*(M^{*^{-1}}(a)xM^{*^{-1}}(b)+M^{*^{-1}}(b)xM^{*^{-1}}(a))\\
&=&M^*(M^{*^{-1}}(a)MM^{-1}(x)M^{*^{-1}}(b)+M^{*^{-1}}(b)MM^{-1}(x)M^{*^{-1}}(a))\\
&=&M^*M^{*^{-1}}(a)M^{-1}(x)M^*M^{*^{-1}}(b)+M^*M^{*^{-1}}(b)M^{-1}(x)M^*M^{*^{-1}}(a)\\
&=&aM^{-1}(x)b+bM^{-1}(x)a,
\end{eqnarray*}
we can directly get $M^{*^{-1}}(aM^{-1}(x)b+bM^{-1}(x)a)=M^{*^{-1}}(a)xM^{*^{-1}}(b)+M^{*^{-1}}(b)xM^{*^{-1}}(a)$. The rest of the proof follows similarly.
\end{proof}

The following result will be used frequently in this note.
\begin{lemma}\label{add}
Let $a, b, c\in \mathcal {R}$ such that $M(c)=M(a)+M(b)$. Then
$$M^{*^{-1}}(tcs+sct)=M^{*^{-1}}(tas+sat)+M^{*^{-1}}(tbs+sbt)$$
for all $t, s\in \mathcal {R}$
\end{lemma}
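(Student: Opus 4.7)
The plan is to exploit Lemma~\ref{inverse}, which supplies a Jordan triple elementary identity for $(M^{*^{-1}}, M^{-1})$. The crucial observation is that in the identity
\[
M^{*^{-1}}(aM^{-1}(x)b+bM^{-1}(x)a)=M^{*^{-1}}(a)\,x\,M^{*^{-1}}(b)+M^{*^{-1}}(b)\,x\,M^{*^{-1}}(a),
\]
the variable $x$ on the right-hand side sits in a slot governed by ordinary ring multiplication, so additivity with respect to that slot is automatic.

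Concretely, I would first rewrite $tcs+sct=tM^{-1}(M(c))s+sM^{-1}(M(c))t$ and apply Lemma~\ref{inverse} with the substitution $(a,b,x)\mapsto(t,s,M(c))$ to obtain
\[
M^{*^{-1}}(tcs+sct)=M^{*^{-1}}(t)\,M(c)\,M^{*^{-1}}(s)+M^{*^{-1}}(s)\,M(c)\,M^{*^{-1}}(t).
\]
Next, I would substitute the hypothesis $M(c)=M(a)+M(b)$ into the right-hand side and distribute. The four resulting terms split cleanly into two pairs corresponding to $M(a)$ and $M(b)$. Applying Lemma~\ref{inverse} in reverse, once with $x=M(a)$ and once with $x=M(b)$, converts each pair back into $M^{*^{-1}}(tas+sat)$ and $M^{*^{-1}}(tbs+sbt)$ respectively, yielding the desired equality.

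There is essentially no obstacle here; the entire argument is a two-step sandwich-and-unsandwich using Lemma~\ref{inverse}. Conceptually, the content of the present lemma is that whatever additive relation is satisfied by values of $M$ passes through the middle slot of the Jordan triple identity and becomes a corresponding additive relation for $M^{*^{-1}}$. This will presumably be the workhorse used repeatedly in the rest of the paper to upgrade equalities of the form $M(c)=M(a)+M(b)$ into equalities built from $M^{*^{-1}}$, eventually combining with the Peirce decomposition $\mathcal{R}=\mathcal{R}_{11}\oplus\mathcal{R}_{12}\oplus\mathcal{R}_{21}\oplus\mathcal{R}_{22}$ and Lemma~\ref{lu} to extract full additivity of $M$ and $M^*$.
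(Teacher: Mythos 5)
Your proposal is correct and is essentially identical to the paper's own proof: both use Lemma~\ref{inverse} to write $M^{*^{-1}}(tcs+sct)$ as $M^{*^{-1}}(t)M(c)M^{*^{-1}}(s)+M^{*^{-1}}(s)M(c)M^{*^{-1}}(t)$, substitute $M(c)=M(a)+M(b)$, distribute, and apply Lemma~\ref{inverse} in reverse to each half. No differences worth noting.
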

\begin{proof} For every $t, s\in \mathcal {R}$, by Lemma \ref{inverse}, we have
\begin{eqnarray*}
& &M^{*^{-1}}(tcs+sct)\\&=&M^{*^{-1}}(tM^{-1}M(c)s+sM^{-1}M(c)t)\\
&=&M^{*^{-1}}(t)M(c)M^{*^{-1}}(s)+M^{*^{-1}}(s)M(c)M^{*^{-1}}(t)\\
&=&M^{*^{-1}}(t)(M(a)+M(b))M^{*^{-1}}(s)+M^{*^{-1}}(s)(M(a)+M(b))M^{*^{-1}}(t)\\
&=&(M^{*^{-1}}(t)M(a)M^{*^{-1}}(s)+M^{*^{-1}}(s)M(a)M^{*^{-1}}(t))\\
& &+(M^{*^{-1}}(t)M(b)M^{*^{-1}}(s)+M^{*^{-1}}(s)M(b)M^{*^{-1}}(t))\\
&=&M^{*^{-1}}(tas+sat)+M^{*^{-1}}(tbs+sbt).
\end{eqnarray*}
\end{proof}

\begin{lemma}\label{lemmaiiij}
Let $a_{ii}\in \mathcal {R}_{ii}$ and $b_{ij}\in \mathcal {R}_{ij}$,
$1\leq i\not =j\leq 2$, then

(i) $M(a_{ii}+b_{ij})=M(a_{ii})+M(b_{ij})$;

(ii)
$M^{*^{-1}}(a_{ii}+b_{ij})=M^{*^{-1}}(a_{ii})+M^{*^{-1}}(b_{ij})$.
\end{lemma}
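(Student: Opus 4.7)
The plan is to prove (i) by using surjectivity of $M$ to produce $c \in \mathcal{R}$ with $M(c) = M(a_{ii}) + M(b_{ij})$ and then forcing $c = a_{ii} + b_{ij}$ via Peirce analysis. Write $c = c_{11} + c_{12} + c_{21} + c_{22}$. Lemma~\ref{add} then supplies
$$M^{*^{-1}}(tcs + sct) = M^{*^{-1}}(ta_{ii}s + sa_{ii}t) + M^{*^{-1}}(tb_{ij}s + sb_{ij}t)$$
for every $t, s \in \mathcal{R}$, and the game is to pick $(t,s)$ cleverly.

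I will specialize $(t,s) \in \{e_1,e_2\}^2$. Taking $t = s = e_i$ kills the $b_{ij}$-term and collapses the identity to $M^{*^{-1}}(2c_{ii}) = M^{*^{-1}}(2a_{ii})$; since $M^{*^{-1}}$ is injective (being the inverse of the bijective $M^*$) and $\mathcal{R}$ is $2$-torsion free, this gives $c_{ii} = a_{ii}$. Taking $t = s = e_j$ kills both right-hand terms, forcing $c_{jj} = 0$ likewise. Finally, $t = e_i$, $s = e_j$ makes the $a_{ii}$-term vanish and leaves $M^{*^{-1}}(c_{ij}+c_{ji}) = M^{*^{-1}}(b_{ij})$; injectivity yields $c_{ij} + c_{ji} = b_{ij} \in \mathcal{R}_{ij}$, whence $c_{ij} = b_{ij}$ and $c_{ji}=0$. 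Collectively $c = a_{ii}+b_{ij}$, so applying $M$ gives (i).

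For (ii) I run the dual argument. By Lemma~\ref{inverse}, the pair $(M^{*^{-1}}, M^{-1})$ is itself Jordan triple elementary, so repeating verbatim the proof of Lemma~\ref{add} for this pair yields: whenever $M^{*^{-1}}(c) = M^{*^{-1}}(a) + M^{*^{-1}}(b)$, one has $M(tcs+sct) = M(tas+sat) + M(tbs+sbt)$ for all $t,s \in \mathcal{R}$. Because $M^{*^{-1}} : \mathcal{R} \to \mathcal{R}'$ is bijective, I pick $c \in \mathcal{R}$ realizing $M^{*^{-1}}(c) = M^{*^{-1}}(a_{ii}) + M^{*^{-1}}(b_{ij})$ and rerun the same three Peirce cases, now invoking injectivity of $M$ in place of injectivity of $M^{*^{-1}}$, to conclude $c = a_{ii}+b_{ij}$ and hence (ii).

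The argument is mostly mechanical; the one point that deserves genuine care is transporting Lemma~\ref{add} to the inverse pair in part~(ii), i.e.\ making sure the symmetry of Lemma~\ref{inverse} is used correctly so that the surjectivity and injectivity hypotheses switch sides cleanly. After that, the Peirce bookkeeping (three cases, each collapsed by the orthogonality $e_i e_j = 0$ and the $2$-torsion free hypothesis) is routine.
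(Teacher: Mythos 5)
Your proof is correct, and its skeleton is the same as the paper's: choose $c$ with $M(c)=M(a_{ii})+M(b_{ij})$ by surjectivity, apply Lemma~\ref{add}, and pin down the Peirce components of $c$; part (ii) then follows by the symmetry of Lemma~\ref{inverse}. Where you genuinely diverge is in how the components are identified. The paper tests against arbitrary elements $t_{ij},d_{ii}$ (and $t_{jj},d_{ji}$), separates the resulting identity into Peirce components, and invokes Lemma~\ref{lu} --- hence condition (P) --- to cancel. You instead test against the idempotents $e_i,e_j$ themselves, which collapses each identity to a statement of the form $M^{*^{-1}}(u)=M^{*^{-1}}(v)$, and then you use injectivity of $M^{*^{-1}}$ (already established), $M^{*^{-1}}(0)=0$, the directness of the Peirce decomposition, and $2$-torsion-freeness to conclude $u=v$ componentwise. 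Your route buys independence from condition (P) and Lemma~\ref{lu} for this particular lemma, and is arguably cleaner; the cost is that it leans on hypotheses the paper's version of this step does not need here, namely $2$-torsion-freeness and unitality (so that $e_2=1-e_1$ is an honest ring element), both of which are indeed assumed in Theorem~\ref{theorem}, so nothing is lost. Your handling of (ii) --- transporting the Lemma~\ref{add} mechanism to the pair $(M^{*^{-1}},M^{-1})$ and swapping the roles of injectivity of $M^{*^{-1}}$ and of $M$ --- is exactly the symmetry the paper invokes implicitly, and you are right that this is the one place requiring care.
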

\begin{proof}
Let $c\in \mathcal {R}$ be chosen such that $M(c)=M(a_{ii})+M(b_{ij})$. For arbitrary $t_{ij}\in \mathcal {R}_{ij}$ and $d_{ii}\in \mathcal {R}_{ii}$, by Lemma \ref{add}, we have
\begin{eqnarray*}
& &M^{*^{-1}}(t_{ij}cd_{ii}+d_{ii}ct_{ij})\\
&=&M^{*^{-1}}(t_{ij}a_{ii}d_{ii}+d_{ii}a_{ii}t_{ij})+M^{*^{-1}}(t_{ij}b_{ij}d_{ii}+d_{ii}b_{ij}t_{ij})\\
&=&M^{*^{-1}}(d_{ii}a_{ii}t_{ij}).
\end{eqnarray*}
Thus  $t_{ij}cd_{ii}+d_{ii}ct_{ij}=d_{ii}a_{ii}t_{ij}$, i.e., $t_{ij}c_{ji}d_{ii}+d_{ii}c_{ii}t_{ij}=d_{ii}a_{ii}t_{ij}$.  By Lemma \ref{lu}, we have $c_{ji}=0$ and $c_{ii}=a_{ii}$.

  Now for any $t_{jj}\in \mathcal {R}_{jj}$ and $d_{ji}\in \mathcal {R}_{ji}$, using Lemma \ref{add}, we have
  \begin{eqnarray*}
 & &M^{*^{-1}}(t_{jj}cd_{ji}+d_{ji}ct_{jj})\\
&=&M^{*^{-1}}(t_{jj}a_{ii}d_{ji}+d_{ji}a_{ii}t_{jj})+M^{*^{-1}}(t_{jj}b_{ij}d_{ji}+d_{ji}b_{ij}t_{jj})\\
&=&M^{*^{-1}}(d_{ji}b_{ij}t_{jj}).
\end{eqnarray*}
It follows that $t_{jj}cd_{ji}+d_{ji}ct_{jj}=d_{ji}b_{ij}t_{jj}$, and so $t_{jj}c_{jj}d_{ji}+d_{ji}b_{ij}t_{jj}=d_{ji}b_{ij}t_{jj}$, which leads to $c_{jj}=0$ and $c_{ij}=b_{ij}$. Therefore $c=a_{ii}+b_{ij}$, as desired.

 By Lemma \ref{inverse} we can infer that (ii) holds.
\end{proof}

Similarly, we can get the following result.
\begin{lemma}\label{lemmaiiji}
Let $a_{ii}\in \mathcal {R}_{ii}$ and $b_{ji}\in \mathcal {R}_{ji}$,
$1\leq i\not =j\leq 2$, then

(i) $M(a_{ii}+b_{ji})=M(a_{ii})+M(b_{ji})$;

(ii)
$M^{*^{-1}}(a_{ii}+b_{ji})=M^{*^{-1}}(a_{ii})+M^{*^{-1}}(b_{ji})$.
\end{lemma}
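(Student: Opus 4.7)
The plan is to mirror the proof of Lemma \ref{lemmaiiij} by swapping the roles of the two off-diagonal Pierce corners, since the new hypothesis places the off-diagonal summand in $\mathcal{R}_{ji}$ rather than $\mathcal{R}_{ij}$. I pick $c \in \mathcal{R}$ with $M(c) = M(a_{ii}) + M(b_{ji})$ (possible because $M$ is surjective), write $c = c_{11} + c_{12} + c_{21} + c_{22}$, and invoke Lemma \ref{add} with two cleverly chosen pairs $(t,s)$ in order to pin down all four Pierce pieces of $c$ and conclude $c = a_{ii} + b_{ji}$.

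For the first application of Lemma \ref{add}, I take $t = t_{ji} \in \mathcal{R}_{ji}$ and $s = d_{ii} \in \mathcal{R}_{ii}$. The Pierce arithmetic forces $t_{ji} b_{ji} d_{ii} + d_{ii} b_{ji} t_{ji} = 0$ (both products vanish because $e_i e_j = 0$) and reduces $t_{ji} a_{ii} d_{ii} + d_{ii} a_{ii} t_{ji}$ to $t_{ji} a_{ii} d_{ii}$; on the $c$-side, $t_{ji} c d_{ii} + d_{ii} c t_{ji}$ collapses to $t_{ji} c_{ii} d_{ii} + d_{ii} c_{ij} t_{ji}$. Injectivity of $M^{*^{-1}}$ then yields $t_{ji} c_{ii} d_{ii} + d_{ii} c_{ij} t_{ji} = t_{ji} a_{ii} d_{ii}$, and since the two summands on the left lie in the disjoint Pierce corners $\mathcal{R}_{ji}$ and $\mathcal{R}_{ii}$, I separate them; Lemma \ref{lu} together with property (P) then delivers $c_{ii} = a_{ii}$ and $c_{ij} = 0$.

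For the second application I choose $t = t_{jj} \in \mathcal{R}_{jj}$ and $s = d_{ij} \in \mathcal{R}_{ij}$. Now the $a_{ii}$ contribution vanishes entirely, while the $b_{ji}$ contribution reduces to $t_{jj} b_{ji} d_{ij} \in \mathcal{R}_{jj}$; the $c$-side collapses to $t_{jj} c_{ji} d_{ij} + d_{ij} c_{jj} t_{jj}$. Separating the surviving terms into the $\mathcal{R}_{jj}$ and $\mathcal{R}_{ij}$ corners and once more invoking Lemma \ref{lu} and (P) gives $c_{jj} = 0$ and $c_{ji} = b_{ji}$, so $c = a_{ii} + b_{ji}$ and (i) is established. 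Part (ii) is then immediate: since Lemma \ref{inverse} guarantees that $(M^{*^{-1}}, M^{-1})$ is itself a Jordan triple elementary map of $\mathcal{R} \times \mathcal{R}'$, applying the just-proved (i) to this pair gives (ii) at no extra cost.

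The main obstacle (such as it is) lies in choosing the Pierce types of $t$ and $s$ at each step so that simultaneously (a) exactly one of $a_{ii}$ and $b_{ji}$ contributes nontrivially while the other is annihilated, and (b) the two surviving pieces of $c$ lie in different Pierce components, so that they can be read off independently after applying Lemma \ref{lu} and (P). The two choices above are the natural ``mirror'' of those used in Lemma \ref{lemmaiiij}, and the rest of the argument is essentially a bookkeeping exercise in the Pierce decomposition.
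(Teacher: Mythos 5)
Your proposal is correct and is exactly the argument the paper intends: the paper omits the proof of this lemma, stating only that it follows ``similarly'' to Lemma \ref{lemmaiiij}, and your index-swapped choices $(t_{ji},d_{ii})$ and $(t_{jj},d_{ij})$ are the right mirror of the pairs used there, with the Pierce bookkeeping and the appeal to Lemma \ref{inverse} for part (ii) all checking out.
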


\begin{lemma} \label{lemma121222}(i) $M(a_{12}+b_{12}c_{22})=M(a_{12})+M(b_{12}c_{22})$;

(ii)
$M^{*^{-1}}(a_{12}+b_{12}c_{22})=M^{*^{-1}}(a_{12})+M^{*^{-1}}(b_{12}c_{22})$;

(iii)  $M(a_{21}+b_{22}c_{21})=M(a_{21})+M(b_{22}c_{21})$;

(iv)
$M^{*^{-1}}(a_{21}+b_{22}c_{21})=M^{*^{-1}}(a_{21})+M^{*^{-1}}(b_{22}c_{21})$.
\end{lemma}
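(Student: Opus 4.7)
My plan is to sidestep Lemma~\ref{add}, which is too weak here because both summands $a_{12}$ and $b_{12}c_{22}$ lie in the single Peirce corner $\mathcal{R}_{12}$, and instead apply the defining JTE identity at one carefully chosen triple $(a,b,x)$, then peel the right-hand side open using the additivity results of Lemmas~\ref{lemmaiiij} and~\ref{lemmaiiji}.

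For (i), I will fix $x \in \mathcal{R}^{\prime}$ with $M^{*}(x) = 1$ (possible since $M^{*}$ is surjective and $\mathcal{R}$ is unital), and set $a = e_1 + b_{12}$, $b = a_{12} + c_{22}$. A short Peirce calculation gives
\[
 (e_1 + b_{12})(a_{12} + c_{22}) + (a_{12} + c_{22})(e_1 + b_{12}) = a_{12} + b_{12}c_{22},
\]
because the six ``other'' products $e_1 c_{22}$, $b_{12}a_{12}$, $a_{12}e_1$, $a_{12}b_{12}$, $c_{22}e_1$, $c_{22}b_{12}$ all vanish by orthogonality of $e_1$ and $e_2$. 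The JTE identity then gives
\[
 M(a_{12} + b_{12}c_{22}) = M(e_1 + b_{12})\, x\, M(a_{12} + c_{22}) + M(a_{12} + c_{22})\, x\, M(e_1 + b_{12}).
\]
Use Lemma~\ref{lemmaiiij}(i) to write $M(e_1+b_{12}) = M(e_1) + M(b_{12})$ and Lemma~\ref{lemmaiiji}(i) to write $M(a_{12}+c_{22}) = M(a_{12}) + M(c_{22})$, then distribute. The resulting eight scalar products rearrange into four symmetric pairs $M(p)\, x\, M(q) + M(q)\, x\, M(p)$, with $(p,q)$ ranging over $(e_1,a_{12})$, $(e_1,c_{22})$, $(b_{12},a_{12})$, $(b_{12},c_{22})$; since $M^{*}(x) = 1$, each such pair equals $M(pq + qp)$ by the JTE identity used in reverse. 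The four values of $pq+qp$ are $a_{12}$, $0$, $0$, $b_{12}c_{22}$, so the total is $M(a_{12}) + M(b_{12}c_{22})$, proving (i).

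For (iii), the analogous choice is $a = a_{21} + b_{22}$ and $b = c_{21} + e_1$; one checks $ab + ba = a_{21} + b_{22}c_{21}$, and Lemmas~\ref{lemmaiiij}(i) and \ref{lemmaiiji}(i) again split $M(a_{21}+b_{22})$ and $M(c_{21}+e_1)$ respectively. The four reverse-JTE pairings now contribute $0$, $M(a_{21})$, $M(b_{22}c_{21})$, $0$. Parts (ii) and (iv) follow by running exactly the same two arguments inside the JTE pair $(M^{*^{-1}}, M^{-1})$ furnished by Lemma~\ref{inverse}; the two additivity inputs required are precisely parts (ii) of Lemmas~\ref{lemmaiiij} and \ref{lemmaiiji}, and one picks $x \in \mathcal{R}^{\prime}$ with $M^{-1}(x) = 1$, i.e.\ $x = M(1)$.

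The only real difficulty is the combinatorial bookkeeping: verifying that in each of (i) and (iii) exactly two of the eight products in the expansion of $ab + ba$ survive, and that the four reverse-JTE pairings collapse cleanly to the right-hand side without spurious extra terms. Once this verification is done, neither Lemma~\ref{add} nor any induction on the Peirce components of a preimage of $M(a_{12}) + M(b_{12}c_{22})$ is required; the lemma reduces to a single application of the JTE identity at a well-chosen argument.
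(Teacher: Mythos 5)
Your proof is correct, and it is essentially the paper's own argument in a slightly rearranged form. The paper also realizes $a_{12}+b_{12}c_{22}$ as a single Jordan triple product built from the same two auxiliary elements $e_1+b_{12}$ and $a_{12}+c_{22}$ and then re-collapses with the defining identity, but it uses the factorization $a_{12}+b_{12}c_{22}=(e_1+b_{12})(a_{12}+c_{22})e_2+e_2(a_{12}+c_{22})(e_1+b_{12})$, placing the cross-corner sum $a_{12}+c_{22}$ in the \emph{middle} slot as $M^*M^{*^{-1}}(a_{12}+c_{22})$ and keeping $e_1+b_{12}$ and $e_2$ as the outer factors; consequently it needs only Lemma \ref{lemmaiiji}(ii) and a four-term expansion. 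Your version puts $1=M^*(x)$ in the middle and both sums in the outer slots, so you must split both outer factors (Lemma \ref{lemmaiiij} \emph{and} Lemma \ref{lemmaiiji}) and handle an eight-term expansion in which two of the four symmetric pairs collapse to $M(0)=0$ (which is legitimate by the first lemma of the paper). All the Peirce computations you cite check out, the required preimage of $1$ under $M^*$ exists by surjectivity, and your derivation of (ii) and (iv) through the inverse pair of Lemma \ref{inverse} matches what the paper does for those parts. The only cost of your route is the extra splitting lemma and the heavier bookkeeping; nothing essential changes.
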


\begin{proof} We only prove (i) and (iii).

  Note that $a_{12}+b_{12}c_{22}=(e_1+b_{12})(a_{12}+c_{22})e_2+e_2(a_{12}+c_{22})(e_1+b_{12})$. We now compute
\begin{eqnarray*}
& &M(a_{12}+b_{12}c_{22})\\
&=&M((e_1+b_{12})(a_{12}+c_{22})e_2+e_2(a_{12}+c_{22})(e_1+b_{12}))\\
&=&M((e_1+b_{12})M^*M^{*^{-1}}(a_{12}+c_{22})e_2+e_2M^*M^{*^{-1}}(a_{12}+c_{22})(e_1+b_{12}))\\
&=&M(e_1+b_{12})M^{*^{-1}}(a_{12})M(e_2)+M(e_1+b_{12})M^{*^{-1}}(c_{22})M(e_2)\\
& &+M(e_2)M^{*^{-1}}(a_{12})M(e_1+b_{12})+M(e_2)M^{*^{-1}}(c_{22})M(e_1+b_{12})\\
&=&M((e_1+b_{12})a_{12}e_2+e_2a_{12}(e_1+b_{12}))+M((e_1+b_{12})c_{22}e_2+e_2c_{22}(e_1+b_{12}))\\
&=&M(a_{12})+M(b_{12}c_{22}).
\end{eqnarray*}
  Similarly, we can get  $M(a_{21}+c_{22}b_{21})=M(a_{21})+M(c_{22}b_{21})$ from the fact that $a_{21}+c_{22}b_{21}=(e_1+b_{21})(a_{21}+c_{22})e_2+e_2(a_{21}+c_{22})(e_1+b_{21})$.

(ii) and (iv) follow from (i) and (iii) respectively by Lemma
\ref{inverse}.
 \end{proof}

 \begin{lemma} \label{lemma12} For any $a_{12}, b_{12}\in \mathcal {R}_{12}$, we have

 (i) $M(a_{12}+b_{12})=M(a_{12})+M(b_{12})$;

(ii)
$M^{*^{-1}}(a_{12}+b_{12})=M^{*^{-1}}(a_{12})+M^{*^{-1}}(b_{12})$.
 \end{lemma}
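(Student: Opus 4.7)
The plan is to obtain Lemma \ref{lemma12} as an immediate specialization of Lemma \ref{lemma121222}. Because $\mathcal{R}$ is unital with identity $1=e_1+e_2$, the idempotent $e_2=1-e_1$ itself lies in $\mathcal{R}_{22}=e_2\mathcal{R}e_2$ (indeed $e_2=e_2\cdot 1\cdot e_2=e_2^2$), so $e_2$ is an admissible value of the variable $c_{22}$ appearing in Lemma \ref{lemma121222}. Moreover, for any $b_{12}\in\mathcal{R}_{12}=e_1\mathcal{R}e_2$ one has $b_{12}e_2=b_{12}$.

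Substituting $c_{22}:=e_2$ into Lemma \ref{lemma121222}(i) will therefore yield
\[
 M(a_{12}+b_{12})=M(a_{12}+b_{12}e_2)=M(a_{12})+M(b_{12}e_2)=M(a_{12})+M(b_{12}),
\]
which is (i). Part (ii) follows in exactly the same way by substituting $c_{22}:=e_2$ into Lemma \ref{lemma121222}(ii).

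There is essentially no obstacle: the argument is a one-line reduction, and the only point to notice is that $e_2\in\mathcal{R}_{22}$, which is guaranteed precisely because $\mathcal{R}$ carries an identity. A stand-alone proof, not invoking Lemma \ref{lemma121222}, would have to mimic its argument, using the decomposition
\[
 a_{12}+b_{12}=(e_1+b_{12})(a_{12}+e_2)e_2+e_2(a_{12}+e_2)(e_1+b_{12})
\]
together with Lemma \ref{lemmaiiji} to split $M^{*^{-1}}(a_{12}+e_2)$ and with Lemma \ref{inverse} to pass between $M$ and $M^{*^{-1}}$, thereby duplicating work already carried out. A naive attempt via Lemma \ref{add} --- choosing $c$ with $M(c)=M(a_{12})+M(b_{12})$ and doing Peirce analysis on $c$ --- is less efficient because, although one can readily force the components $c_{11},c_{21},c_{22}$ to vanish using Lemma \ref{lu}, the identification of $c_{12}$ with $a_{12}+b_{12}$ only produces an equality after applying $M^{*^{-1}}$, which is not yet known to be additive.
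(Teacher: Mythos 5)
Your proof is correct and takes a genuinely different, much shorter route than the paper's. Since $\mathcal{R}$ is unital, $e_2=1-e_1$ does lie in $\mathcal{R}_{22}=e_2\mathcal{R}e_2$ and $b_{12}e_2=b_{12}$ for every $b_{12}\in\mathcal{R}_{12}$, so Lemma \ref{lemma12} is literally the special case $c_{22}=e_2$ of Lemma \ref{lemma121222}; there is no circularity, because Lemma \ref{lemma121222} is proved from Lemma \ref{inverse} and Lemmas \ref{lemmaiiij}--\ref{lemmaiiji} only. The paper instead runs the standard Peirce analysis: it picks $c$ with $M(c)=M(a_{12})+M(b_{12})$, computes $M^{*^{-1}}(t_{11}cs_{22}+s_{22}ct_{11})$ via Lemma \ref{add}, invokes Lemma \ref{lemma121222}(ii) applied to $t_{11}a_{12}s_{22}$ and $(t_{11}b_{12})s_{22}$ to recombine the two images into a single $M^{*^{-1}}$-image, cancels $M^{*^{-1}}$ by injectivity to get $c_{12}=a_{12}+b_{12}$, and then kills $c_{11},c_{21},c_{22}$ with Lemma \ref{lu} and condition (P). So the ``naive attempt via Lemma \ref{add}'' that you dismiss at the end is in fact exactly the paper's proof, and the obstacle you cite (that $M^{*^{-1}}$ is not yet known to be additive) is surmounted there precisely by Lemma \ref{lemma121222}(ii); that closing remark is the one inaccuracy in your write-up, though it does not affect the validity of your own argument. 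What your specialization buys is a one-line proof; what the paper's longer computation would buy is applicability in a setting where $\mathcal{R}_{12}\mathcal{R}_{22}$ need not exhaust $\mathcal{R}_{12}$, but under the stated hypothesis that $\mathcal{R}$ is unital your reduction is perfectly legitimate.
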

 \begin{proof}
We only show (i). We pick  $c=c_{11}+c_{12}+c_{21}+c_{22}\in
\mathcal {R}$ such that $M(c)=M(a_{12})+M(b_{12})$. For any
$t_{11}\in \mathcal {R}_{11}$ and $s_{22}\in \mathcal {R}_{22}$, we have
\begin{eqnarray*}
& &M^{*^{-1}}(t_{11}cs_{22}+s_{22}ct_{11})\\
&=&M^{*^{-1}}(t_{11}a_{12}s_{22}+s_{22}a_{12}t_{11})+M^{*^{-1}}(t_{11}b_{12}s_{22}+s_{22}b_{12}t_{11})\\
&=&M^{*^{-1}}(t_{11}a_{12})+M^{*^{-1}}(t_{11}b_{12})\\
&=&M^{*^{-1}}(t_{11}a_{12}s_{22}+t_{11}b_{12}s_{22}).
\end{eqnarray*}

Note that   we apply Lemma \ref{add} in the first equality and Lemma \ref{lemma121222} in the last equality.

Therefore we have  $t_{11}cs_{22}+s_{22}ct_{11}=t_{11}a_{12}s_{22}+t_{11}b_{12}s_{22}$. Consequently,
$$ t_{11}c_{21}s_{22}+s_{22}c_{21}t_{11}=t_{11}a_{12}s_{22}+t_{11}b_{12}s_{22}.$$
It follows that $c_{21}=0$ and $c_{12}=a_{12}+b_{12}$.

   To complete the proof it remains to show that $c_{11}=c_{22}=0$. For arbitrary $t_{21}\in \mathcal {R}_{21}$ and $s_{12}\in \mathcal {R}_{12}$, by Lemma \ref{add}, we  compute
\begin{eqnarray*}
& & M^{*^{-1}}(t_{21}cs_{12}+s_{12}ct_{21})\\
&=&M^{*^{-1}}(t_{21}a_{12}s_{12}+s_{12}a_{12}t_{21})+M^{*^{-1}}(t_{21}b_{12}s_{12}+s_{12}b_{12}t_{21})\\
&=&0.
\end{eqnarray*}
This yields that $t_{21}cs_{12}+s_{12}ct_{21}=0$. Furthermore, $t_{21}c_{11}s_{12}+s_{12}c_{22}t_{21}=0$.
By Lemma \ref{lu}, we see that $c_{11}=c_{22}=0$.
  \end{proof}

  \begin{lemma}\label{lemma21}
  The following hold.

  (i)   $M(a_{21}+b_{21})=M(a_{21})+M(b_{21})$;

 (ii)  $M^{*^{-1}}(a_{21}+b_{21})=M^{*^{-1}}(a_{21})+M^{*^{-1}}(b_{21})$.
 \end{lemma}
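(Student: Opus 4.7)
The proof will parallel that of Lemma \ref{lemma12}, with the indices $1$ and $2$ interchanged throughout. The plan is to choose $c = c_{11}+c_{12}+c_{21}+c_{22}\in\mathcal{R}$ with $M(c)=M(a_{21})+M(b_{21})$ and to pin down each Peirce component of $c$ by applying Lemma \ref{add} to two well-chosen pairs of test elements, concluding that $c = a_{21}+b_{21}$. Part (ii) will then follow from part (i) by applying Lemma \ref{inverse}, exactly as in the preceding lemmas.

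For the first step, I take $t_{22}\in\mathcal{R}_{22}$ and $s_{11}\in\mathcal{R}_{11}$. Because $\mathcal{R}_{11}\mathcal{R}_{21}=\{0\}$, the terms $s_{11}a_{21}t_{22}$ and $s_{11}b_{21}t_{22}$ vanish, and Lemma \ref{add} becomes
\[
 M^{*^{-1}}(t_{22}cs_{11}+s_{11}ct_{22}) = M^{*^{-1}}(t_{22}a_{21}s_{11}) + M^{*^{-1}}(t_{22}b_{21}s_{11}).
\]
Lemma \ref{lemma121222}(iv), applied to the factorisation $t_{22}b_{21}s_{11} = t_{22}\cdot(b_{21}s_{11})\in\mathcal{R}_{22}\mathcal{R}_{21}$, merges the right side into the single expression $M^{*^{-1}}(t_{22}(a_{21}+b_{21})s_{11})$. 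Invoking the injectivity of $M^{*^{-1}}$ together with the Peirce identities $t_{22}cs_{11}=t_{22}c_{21}s_{11}\in\mathcal{R}_{21}$ and $s_{11}ct_{22}=s_{11}c_{12}t_{22}\in\mathcal{R}_{12}$, the disjoint-corner components force $s_{11}c_{12}t_{22}=0$ and $t_{22}(c_{21}-a_{21}-b_{21})s_{11}=0$ for all admissible $s_{11},t_{22}$, whence $c_{12}=0$ and $c_{21}=a_{21}+b_{21}$ by Lemma \ref{lu}(i).

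For the second step, I use the dual pair $t_{21}\in\mathcal{R}_{21}$ and $s_{12}\in\mathcal{R}_{12}$. Since $t_{21}a_{21}=0$ and $s_{12}a_{21}t_{21}\in\mathcal{R}_{11}\mathcal{R}_{21}=\{0\}$, both $t_{21}a_{21}s_{12}+s_{12}a_{21}t_{21}$ and its $b_{21}$-analog vanish, so Lemma \ref{add} gives $M^{*^{-1}}(t_{21}cs_{12}+s_{12}ct_{21})=0$, and hence $t_{21}cs_{12}+s_{12}ct_{21}=0$. A short Peirce computation identifies this sum with $t_{21}c_{11}s_{12}+s_{12}c_{22}t_{21}\in\mathcal{R}_{22}\oplus\mathcal{R}_{11}$; since the two summands lie in disjoint corners, each must vanish separately, and Lemma \ref{lu}(i) then delivers $c_{11}=c_{22}=0$. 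Combining the two steps yields $c = a_{21}+b_{21}$, proving (i).

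The argument is essentially Peirce-corner bookkeeping, and no genuine obstacle is expected: the only point to watch is that each chosen pair $(t,s)$ must annihilate $tas+sat$ and $tbs+sbt$ in just the right way so that the right hand side of Lemma \ref{add} either collapses to zero or fits the template of Lemma \ref{lemma121222}(iv). Because the $(1,2)$ and $(2,1)$ corners play fully symmetric roles in the hypotheses, the analysis mirrors Lemma \ref{lemma12} verbatim, and (ii) drops out of (i) through Lemma \ref{inverse}.
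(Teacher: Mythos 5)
Your argument is correct and follows essentially the same route as the paper: choose $c$ with $M(c)=M(a_{21})+M(b_{21})$, test against $t_{22},s_{11}$ using Lemma \ref{add} and Lemma \ref{lemma121222} to get $c_{12}=0$ and $c_{21}=a_{21}+b_{21}$, then test against a $\mathcal{R}_{12}$--$\mathcal{R}_{21}$ pair to kill $c_{11}$ and $c_{22}$, with (ii) following via Lemma \ref{inverse}. The only difference is the immaterial swap of the roles of $t$ and $s$ in the second step.
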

 \begin{proof} Let  $c=c_{11}+c_{12}+c_{21}+c_{22}\in \mathcal {R}$ be chosen such that $M(a_{21})+M(b_{21})=M(c)$.

 For any $t_{22}\in \mathcal {R}_{22}$ and $s_{11}\in \mathcal {R}_{11}$, using Lemma \ref{add} and Lemma \ref{lemma121222}, we have
 \begin{eqnarray*}
 & &M^{*^{-1}}(t_{22}cs_{11}+s_{11}ct_{22})\\
 &=&M^{*^{-1}}(t_{22}a_{21}s_{11}+s_{11}a_{21}t_{22})+M^{*^{-1}}(t_{22}b_{21}s_{11}+s_{11}b_{21}t_{22})\\
 &=&M^{*^{-1}}(t_{22}a_{21}s_{11})+M^{*^{-1}}(t_{22}b_{21}s_{11})\\
 &=&M^{*^{-1}}(t_{22}a_{21}s_{11}+t_{22}b_{21}s_{11})
 \end{eqnarray*}
 which implies that $t_{22}cs_{11}+s_{11}ct_{22}=t_{22}a_{21}s_{11}+t_{22}b_{21}s_{11}$, and so
  $$ t_{22}cs_{11}+s_{11}ct_{22}=t_{22}a_{21}s_{11}+t_{22}b_{21}s_{11}.$$
 It follows that $$t_{22}c_{21}s_{11}+s_{11}c_{12}t_{22}=t_{22}a_{21}s_{11}+t_{22}b_{21}s_{11}.$$ Therefore we can infer that  $c_{12}=0$ and $c_{21}=a_{21}+b_{21}$.

We now show that $C_{11}=c_{22}=0$. To this aim, for any $t_{12}\in \mathcal {R}_{12}$ and $s_{21}\in \mathcal {R}_{21}$, let's consider
\begin{eqnarray*}
& &M^{*^{-1}}(t_{12}cs_{21}+s_{21}ct_{12})\\
 &=&M^{*^{-1}}(t_{12}a_{21}s_{21}+s_{21}a_{21}t_{12})+M^{*^{-1}}(t_{12}b_{21}s_{21}+s_{21}b_{21}t_{12})\\
 &=&0.
\end{eqnarray*}
It follows that $t_{12}cs_{21}+s_{21}ct_{12}=0$, and so $t_{12}c_{22}s_{21}+s_{21}c_{11}t_{12}=0$. Hence $c_{11}=c_{22}=0$. The proof is complete.
   \end{proof}

   \begin{lemma} \label{lemma11}For arbitrary $a_{11}, b_{11}\in \mathcal {R}_{11}$, the following are true.

  (i)   $M(a_{11}+b_{11})=M(a_{11})+M(b_{11})$;

 (ii)  $M^{*^{-1}}(a_{11}+b_{11})=M^{*^{-1}}(a_{11})+M^{*^{-1}}(b_{11})$.
 \end{lemma}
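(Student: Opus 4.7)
The plan is to mimic the proofs of Lemmas \ref{lemma12} and \ref{lemma21}: choose $c = c_{11} + c_{12} + c_{21} + c_{22} \in \mathcal{R}$ with $M(c) = M(a_{11}) + M(b_{11})$, and determine each Peirce component of $c$ by feeding two carefully chosen pairs $(t,s)$ into Lemma \ref{add}.

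The first choice I would make is $t = e_1$ and $s = s_{12}$ for arbitrary $s_{12} \in \mathcal{R}_{12}$. Because $s_{12}a_{11} = s_{12}b_{11} = 0$, the ``$sat$'' and ``$sbt$'' summands disappear and Lemma \ref{add} reduces to
\[
M^{*^{-1}}(e_1 c s_{12} + s_{12} c e_1) \;=\; M^{*^{-1}}(a_{11} s_{12}) + M^{*^{-1}}(b_{11} s_{12}).
\]
The right-hand side lives in $\mathcal{R}_{12}$, so Lemma \ref{lemma12}(ii) compresses it to $M^{*^{-1}}((a_{11}+b_{11}) s_{12})$. Computing the Peirce truncations on the left, $e_1 c s_{12} = c_{11} s_{12}$ and $s_{12} c e_1 = s_{12} c_{21}$, and applying injectivity of $M^*$, I obtain
\[
c_{11} s_{12} + s_{12} c_{21} \;=\; (a_{11}+b_{11}) s_{12}.
\]
Since $c_{11}s_{12}, (a_{11}+b_{11})s_{12} \in \mathcal{R}_{12}$ while $s_{12}c_{21} \in \mathcal{R}_{11}$, comparing the two blocks separately and invoking Lemma \ref{lu}(i) (and its dual) yields $c_{11} = a_{11}+b_{11}$ and $c_{21} = 0$.

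The second choice is $t = e_2$ and $s = s_{21}$ with $s_{21} \in \mathcal{R}_{21}$ arbitrary. Every summand on the right of Lemma \ref{add} now vanishes, since $a_{11}$ and $b_{11}$ are annihilated by $e_2$ from either side, so injectivity of $M^*$ gives $c_{22} s_{21} + s_{21} c_{12} = 0$. The two summands sit in distinct Peirce blocks $\mathcal{R}_{21}$ and $\mathcal{R}_{22}$, so each must vanish for every $s_{21}$, and Lemma \ref{lu}(i) forces $c_{22} = 0$ and $c_{12} = 0$. Combining the two steps gives $c = a_{11} + b_{11}$, which is (i); part (ii) then follows by applying (i) to the Jordan triple elementary pair $(M^{*^{-1}}, M^{-1})$ supplied by Lemma \ref{inverse}.

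I do not anticipate a serious obstacle, since the whole argument is a routine variant of the test-element technique used in Lemmas \ref{lemma12} and \ref{lemma21}. The only mildly subtle point is the first step: unlike the earlier lemmas, the right-hand side of Lemma \ref{add} does not automatically collapse into a single $M^{*^{-1}}$, so I must first invoke the already-established additivity on $\mathcal{R}_{12}$ (Lemma \ref{lemma12}(ii)) to merge the two terms before injectivity can be applied.
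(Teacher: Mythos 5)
Your proposal is correct and follows essentially the same strategy as the paper: choose $c$ with $M(c)=M(a_{11})+M(b_{11})$, apply Lemma \ref{add} with suitable test pairs, use the already-proved additivity on $\mathcal{R}_{12}$ (Lemma \ref{lemma12}) to merge the two terms at the step identifying $c_{11}$, and then read off the Peirce components via injectivity of $M^*$ and Lemma \ref{lu}. The only difference is cosmetic — you test with $(e_1,s_{12})$ and $(e_2,s_{21})$ where the paper uses $(t_{22},s_{ij})$ and $(t_{12},s_{11})$ — and your observation that Lemma \ref{lemma12}(ii) is needed to collapse the right-hand side is exactly the subtle point the paper's proof relies on.
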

 \begin{proof} We only prove (i). Pick  $c=c_{11}+c_{12}+c_{21}+c_{22}\in \mathcal {R}$  such that $M(c)=M(a_{11})+M(b_{11})$.

 For any $t_{22}\in \mathcal {R}_{22}$ and $s_{ij}\in \mathcal {R}_{ij}$, by Lemma \ref{add}, we have
$$ M^{*^{-1}}(t_{22}cs_{ij}+s_{ij}ct_{22})=M^{*^{-1}}(t_{22}a_{11}s_{ij}+s_{ij}a_{11}t_{22})+M^{*^{-1}}(t_{22}b_{11}s_{ij}+s_{ij}b_{11}t_{22})=0.$$
 This implies that
 \begin{equation}\label{1a}
 t_{22}cs_{ij}+s_{ij}ct_{22}=0.
 \end{equation}

 Letting $i=j=1$ in the above equality, we get $t_{22}c_{21}s_{11}+s_{11}c_{12}t_{22}=0$, it follows that $c_{21}=c_{12}=0$.

 If we let $i=2$ and $j=1$ in equality (\ref{1a}), then we get $t_{22}c_{22}s_{21}+s_{21}c_{12}t_{22}=0$. Therefore $c_{22}=0$.

 To complete the proof it remain to show that $c_{11}=a_{11}+b_{11}$. For arbitrary $t_{12}\in \mathcal {R}_{12}$ and $s_{11}\in \mathcal {R}_{11}$. We compute
 \begin{eqnarray*}
 & &M^{*^{-1}}(t_{12}cs_{11}+s_{11}ct_{12})\\
 &=&M^{*^{-1}}(t_{12}a_{11}s_{11}+s_{11}a_{11}t_{12})+M^{*^{-1}}(t_{12}b_{11}s_{11}+s_{11}b_{11}t_{12})\\
 &=&M^{*^{-1}}(s_{11}a_{11}t_{12})+M^{*^{-1}}(s_{11}b_{11}t_{12})\\
 &=&M^{*^{-1}}(s_{11}a_{11}t_{12}+s_{11}b_{11}t_{12}).
 \end{eqnarray*}
 It follows that $t_{12}cs_{11}+s_{11}ct_{12}=s_{11}a_{11}t_{12}+s_{11}b_{11}t_{12}$, and so  $t_{12}c_{21}s_{11}+s_{11}c_{11}t_{12}=s_{11}a_{11}t_{12}+s_{11}b_{11}t_{12}$. By Lemma \ref{lu}, we arrive at  $c_{11}=a_{11}+b_{11}$.
 \end{proof}
 Similarly, we have
 \begin{lemma} \label{lemma22}For arbitrary $a_{22}, b_{22}\in \mathcal {R}_{22}$, we have

  (i)   $M(a_{22}+b_{22})=M(a_{22})+M(b_{22})$;

 (ii)  $M^{*^{-1}}(a_{22}+b_{22})=M^{*^{-1}}(a_{22})+M^{*^{-1}}(b_{22})$.
 \end{lemma}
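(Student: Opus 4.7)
The plan is to mirror the argument of Lemma \ref{lemma11}, simply swapping the roles of the indices $1$ and $2$. I will prove (i); then (ii) follows by applying Lemma \ref{inverse}, exactly as in the previous lemmas.

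First I would pick $c=c_{11}+c_{12}+c_{21}+c_{22}\in\mathcal{R}$ with $M(c)=M(a_{22})+M(b_{22})$, and then systematically determine its four corner components. For the first three corners I would test $c$ against $e_1$-type elements: for any $t_{11}\in\mathcal{R}_{11}$ and any $s_{ij}\in\mathcal{R}_{ij}$, Lemma \ref{add} gives
\begin{equation*}
M^{*^{-1}}(t_{11}cs_{ij}+s_{ij}ct_{11})=M^{*^{-1}}(t_{11}a_{22}s_{ij}+s_{ij}a_{22}t_{11})+M^{*^{-1}}(t_{11}b_{22}s_{ij}+s_{ij}b_{22}t_{11})=0,
\end{equation*}
since $t_{11}a_{22}=a_{22}t_{11}=0$ and similarly for $b_{22}$. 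Hence $t_{11}cs_{ij}+s_{ij}ct_{11}=0$. Choosing $(i,j)=(2,2)$ yields $t_{11}c_{12}s_{22}+s_{22}c_{21}t_{11}=0$, so by Lemma \ref{lu} we obtain $c_{12}=0$ and $c_{21}=0$. Choosing $(i,j)=(1,2)$ yields $t_{11}c_{11}s_{12}=0$ (since $c_{21}=0$), which gives $c_{11}=0$.

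The remaining and only delicate step is to identify $c_{22}$ with $a_{22}+b_{22}$. For arbitrary $t_{21}\in\mathcal{R}_{21}$ and $s_{22}\in\mathcal{R}_{22}$, I would use Lemma \ref{add} together with Lemma \ref{lemma121222}(iv) (whose whole point is precisely to handle sums of the form $a_{21}+b_{22}c_{21}$, or dually applied here) to compute
\begin{align*}
M^{*^{-1}}(t_{21}cs_{22}+s_{22}ct_{21})
&=M^{*^{-1}}(t_{21}a_{22}s_{22}+s_{22}a_{22}t_{21})+M^{*^{-1}}(t_{21}b_{22}s_{22}+s_{22}b_{22}t_{21})\\
&=M^{*^{-1}}(s_{22}a_{22}t_{21})+M^{*^{-1}}(s_{22}b_{22}t_{21})\\
&=M^{*^{-1}}(s_{22}a_{22}t_{21}+s_{22}b_{22}t_{21}),
\end{align*}
since $t_{21}a_{22}s_{22}=t_{21}b_{22}s_{22}=0$. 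Injectivity of $M^{*^{-1}}$ then gives $s_{22}c_{22}t_{21}=s_{22}(a_{22}+b_{22})t_{21}$ (the $c_{12}$-term vanishes because $c_{12}=0$), whence Lemma \ref{lu} forces $c_{22}=a_{22}+b_{22}$, completing (i). Part (ii) is immediate from Lemma \ref{inverse}.

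The only place that requires any thought is the fusion step in the last display, where one must recognize $M^{*^{-1}}(s_{22}a_{22}t_{21})+M^{*^{-1}}(s_{22}b_{22}t_{21})=M^{*^{-1}}(s_{22}a_{22}t_{21}+s_{22}b_{22}t_{21})$; this is not a circular appeal to the very additivity we are proving, but an application of Lemma \ref{lemma121222}(iv) to the $(21)$-sum $s_{22}a_{22}t_{21}+s_{22}b_{22}t_{21}$, exactly as was done in the $\mathcal{R}_{11}$ case of Lemma \ref{lemma11}. Everything else is a routine corner-by-corner decomposition via Lemma \ref{lu}.
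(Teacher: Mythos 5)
Your proposal is correct and is exactly the argument the paper intends: the paper omits the proof with a ``Similarly,'' and your write-up is the faithful index-swapped version of the proof of Lemma \ref{lemma11}, with the corner computations and the appeal to Lemma \ref{lu} all checking out. The one step needing care --- fusing $M^{*^{-1}}(s_{22}a_{22}t_{21})+M^{*^{-1}}(s_{22}b_{22}t_{21})$ --- is handled legitimately via Lemma \ref{lemma121222}(iv) (Lemma \ref{lemma21}(ii) would serve equally well), so there is no circularity.
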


  \begin{lemma}\label{lemma1122}
 For arbitrary $a_{11}\in \mathcal {R}_{11}$ and $b_{22}\in \mathcal {R}_{22}$, the following hold.

 (i) $M(a_{11}+b_{22})=M(a_{11})+M(b_{22})$;

 (ii) $M^{*^{-1}}(a_{11}+b_{22})=M^{*^{-1}}(a_{11})+M^{*^{-1}}(b_{22})$.
 \end{lemma}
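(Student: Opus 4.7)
The plan is to mimic the template established in Lemmas \ref{lemma11} and \ref{lemma22}: pick $c = c_{11} + c_{12} + c_{21} + c_{22} \in \mathcal{R}$ with $M(c) = M(a_{11}) + M(b_{22})$ and show $c = a_{11} + b_{22}$. Part (ii) will follow automatically from (i) by applying the same argument to the Jordan triple elementary pair $(M^{*^{-1}}, M^{-1})$ provided by Lemma \ref{inverse}, so I would focus on (i).

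The key observation is that $a_{11}$ and $b_{22}$ sit in ``orthogonal'' Peirce corners, so that for suitably chosen test elements the cross-terms on the right-hand side of Lemma \ref{add} collapse almost entirely and let me read off the components of $c$ one at a time. Two applications of Lemma \ref{add} should suffice.

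Step 1: apply Lemma \ref{add} with $t = t_{12} \in \mathcal{R}_{12}$ and $s = s_{11} \in \mathcal{R}_{11}$. Since $t_{12}a_{11}=0$, $b_{22}s_{11}=0$, $s_{11}b_{22}=0$, and $t_{12}b_{22}s_{11}=0$, the identity collapses to
\[
M^{*^{-1}}(t_{12}cs_{11}+s_{11}ct_{12}) = M^{*^{-1}}(s_{11}a_{11}t_{12}),
\]
hence $t_{12}cs_{11}+s_{11}ct_{12} = s_{11}a_{11}t_{12}$. Checking Peirce components, $t_{12}cs_{11}=t_{12}c_{21}s_{11}\in\mathcal{R}_{11}$ while $s_{11}ct_{12}=s_{11}c_{11}t_{12}\in\mathcal{R}_{12}$; the right-hand side lies in $\mathcal{R}_{12}$. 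Separating the two components and invoking Lemma \ref{lu}, I conclude $c_{21}=0$ and $c_{11}=a_{11}$.

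Step 2: apply Lemma \ref{add} with $t = t_{21} \in \mathcal{R}_{21}$ and $s = s_{22} \in \mathcal{R}_{22}$. Symmetrically, all terms involving $a_{11}$ vanish and among the $b_{22}$-terms only $s_{22}b_{22}t_{21}$ survives, giving
\[
t_{21}cs_{22} + s_{22}ct_{21} = s_{22}b_{22}t_{21}.
\]
Decomposing into Peirce components ($t_{21}cs_{22}=t_{21}c_{12}s_{22}\in\mathcal{R}_{22}$ and $s_{22}ct_{21}=s_{22}c_{22}t_{21}\in\mathcal{R}_{21}$) and applying Lemma \ref{lu} yields $c_{12}=0$ and $c_{22}=b_{22}$. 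Combining the two steps, $c = a_{11}+b_{22}$, which proves (i). I do not anticipate any real obstacle: everything is parallel to the preceding lemmas, and the only mildly delicate point is verifying which Peirce components of the various products are forced to vanish; the choice of test elements is in fact essentially dictated by the requirement that the $a_{11}$- and $b_{22}$-contributions both degenerate to a single surviving term.
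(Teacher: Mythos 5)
Your proof is correct and follows essentially the same approach as the paper: choose $c$ with $M(c)=M(a_{11})+M(b_{22})$, then use Lemma \ref{add} with two pairs of Peirce-corner test elements so that all but one term on the right collapses, and read off the components of $c$ via Lemma \ref{lu}. The only (immaterial) difference is your choice of test pairs $(t_{12},s_{11})$ and $(t_{21},s_{22})$ versus the paper's $(t_{11},s_{21})$ and $(t_{22},s_{12})$, which merely swaps which step kills $c_{12}$ and which kills $c_{21}$.
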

 \begin {proof}
 We only prove (i). Let  $c=c_{11}+c_{12}+c_{21}+c_{22}$ be an element of  $\mathcal {R}$ satisfying $M(c)=M(a_{11})+M(a_{22})$.

  For any $t_{11}\in \mathcal {R}_{11}$ and $s_{21}\in \mathcal {R}_{21}$, we compute
  \begin{eqnarray*}
  & &M^{*^{-1}}(t_{11}cs_{21}+s_{21}ct_{11})\\
  &=&M^{*^{-1}}(t_{11}a_{11}s_{21}+s_{21}a_{11}t_{11})+M^{*^{-1}}(t_{11}b_{22}s_{21}+s_{21}b_{22}t_{11})\\
 &=&M^{*^{-1}}(s_{21}a_{11}t_{11}).
 \end{eqnarray*}
 This implies that $t_{11}cs_{21}+s_{21}ct_{11}=s_{21}a_{11}t_{11}$.  It follows that $t_{11}c_{12}s_{21}+s_{21}c_{11}t_{11}=s_{21}a_{11}t_{11}$, and so $c_{12}=0$ and $c_{11}=a_{11}$.

 To complete the proof, we need to show that $c_{22}=b_{22}$ and $c_{21}=0$. For any $t_{22}\in \mathcal {R}_{22}$ and $s_{12}\in \mathcal {R}_{12}$, we obtain
 \begin{eqnarray*}
 & &M^{*^{-1}}(t_{22}cs_{12}+s_{12}ct_{22})\\
 &=&M^{*^{-1}}(t_{22}a_{11}s_{12}+s_{12}a_{11}t_{22})+M^{*^{-1}}(t_{22}b_{22}s_{12}+s_{12}b_{22}t_{22})\\
 &=&M^{*^{-1}}(s_{12}b_{22}t_{22}).
 \end{eqnarray*}
  It follows that $t_{22}cs_{12}+s_{12}ct_{22}=s_{12}b_{22}t_{22}$, which leads to $t_{22}c_{21}s_{12}+s_{12}c_{22}t_{22}=s_{12}b_{22}t_{22}$, and so $c_{22}=b_{22}$ and $c_{21}=0$. The proof is done.

 \end{proof}
  \begin{lemma}
 For arbitrary $a_{12}\in \mathcal {R}_{12}$ and $b_{21}\in \mathcal {R}_{21}$, we have

 (i) $M(a_{12}+b_{21})=M(a_{12})+M(b_{21})$;

 (ii) $M^{*^{-1}}(a_{12}+b_{21})=M^{*^{-1}}(a_{12})+M^{*^{-1}}(b_{21})$.
 \end{lemma}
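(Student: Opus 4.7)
The plan is to mimic the template that has been used throughout this sequence of lemmas: choose $c=c_{11}+c_{12}+c_{21}+c_{22}\in\mathcal{R}$ with $M(c)=M(a_{12})+M(b_{21})$, and then pin down the four Peirce components of $c$ by applying Lemma \ref{add} to two carefully chosen pairs of test elements. Part (ii) will then be a free corollary via Lemma \ref{inverse}, applied to the pair $(M^{*^{-1}}, M^{-1})$ which is itself a Jordan triple elementary map; so I only sketch the argument for (i).

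For the first pair of tests, I would take $t_{11}\in\mathcal{R}_{11}$ and $s_{12}\in\mathcal{R}_{12}$ and apply Lemma \ref{add}:
\[
M^{*^{-1}}(t_{11}cs_{12}+s_{12}ct_{11})=M^{*^{-1}}(t_{11}a_{12}s_{12}+s_{12}a_{12}t_{11})+M^{*^{-1}}(t_{11}b_{21}s_{12}+s_{12}b_{21}t_{11}).
\]
A Peirce inspection kills every term on the right except $s_{12}b_{21}t_{11}$, while the left-hand argument collapses to $t_{11}c_{11}s_{12}+s_{12}c_{21}t_{11}$. Injectivity of $M^{*^{-1}}$ gives
\[
t_{11}c_{11}s_{12}+s_{12}c_{21}t_{11}=s_{12}b_{21}t_{11}.
\]
Comparing $(1,2)$- and $(1,1)$-components (using $e_1\in\mathcal{R}_{11}$ available from the unit, together with Lemma \ref{lu}(i) and its dual) yields $c_{11}=0$ and $c_{21}=b_{21}$.

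For the second pair of tests, I would take $t_{22}\in\mathcal{R}_{22}$ and $s_{21}\in\mathcal{R}_{21}$. The same mechanism of Lemma \ref{add} plus Peirce bookkeeping produces
\[
t_{22}c_{22}s_{21}+s_{21}c_{12}t_{22}=s_{21}a_{12}t_{22},
\]
since this time only the term $s_{21}a_{12}t_{22}$ survives on the right. Separating components (the left summand lies in $\mathcal{R}_{21}$, the right in $\mathcal{R}_{22}$) and specializing $t_{22}=e_2$ together with Lemma \ref{lu} force $c_{22}=0$ and $c_{12}=a_{12}$. This gives $c=a_{12}+b_{21}$, finishing (i), and (ii) follows by running the identical argument for $M^{*^{-1}}$ via Lemma \ref{inverse}.

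The argument involves no new idea; the only real care is in the Peirce bookkeeping — checking that, for the chosen test elements, the three terms on the right-hand side of Lemma \ref{add} each collapse to a single summand (and not to a sum mixing several $\mathcal{R}_{ij}$), which is why I picked $(t_{11},s_{12})$ and $(t_{22},s_{21})$ specifically: these are exactly the pairs that kill the "wrong-side" contributions of both $a_{12}$ and $b_{21}$. Hence the potential obstacle — avoiding cross-contamination between the components of $c$ that one is trying to solve for — is handled by the choice of test elements, and the unital assumption lets the final extraction of each $c_{ij}$ proceed directly through Lemma \ref{lu} without invoking property (P).
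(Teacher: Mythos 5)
Your proposal is correct and follows essentially the same template as the paper's own proof: the paper uses the test pairs $(t_{12},s_{11})$ and $(t_{21},s_{22})$, which under the symmetry of $tcs+sct$ in $t$ and $s$ are the same choices as your $(t_{11},s_{12})$ and $(t_{22},s_{21})$, and it extracts $c_{11}=0$, $c_{21}=b_{21}$, $c_{22}=0$, $c_{12}=a_{12}$ via Lemma \ref{lu} exactly as you do. Your Peirce bookkeeping checks out, so there is nothing to add.
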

 \begin {proof}
Suppose that  $M(c)=M(a_{12})+M(a_{21})$ for some $c=c_{11}+c_{12}+c_{21}+c_{22}\in \mathcal {R}$.

 Now for arbitrary $t_{12}\in \mathcal {R}_{12}$ and $s_{11}\in \mathcal {R}_{11}$, we have
 \begin{eqnarray*}
 & &M^{*^{-1}}(t_{12}cs_{11}+s_{11}ct_{12})\\
 &=&M^{*^{-1}}(t_{12}a_{12}s_{11}+s_{11}a_{12}t_{12})+M^{*^{-1}}(t_{12}b_{21}s_{11}+s_{11}b_{21}t_{12})\\
 &=&M^{*^{-1}}(t_{12}b_{21}s_{11}).
 \end{eqnarray*}
 Therefore
 $$ t_{12}cs_{11}+s_{11}ct_{12}=t_{12}b_{21}s_{11},$$
 i. e., $t_{12}c_{21}s_{11}+s_{11}c_{11}t_{12}=t_{12}b_{21}s_{11}$. This implies that $c_{21}=b_{21}$ and $c_{11}=0$ by Lemma \ref{lu}.

 We now show that $C_{12}=a_{12}$ and $c_{22}=0$. For any $t_{21}\in \mathcal {R}_{21}$ and $s_{22}\in \mathcal {R}_{22}$, we obtain
 \begin{eqnarray*}
 & &M^{*^{-1}}(t_{21}cs_{22}+s_{22}ct_{21})\\
 &=&M^{*^{-1}}(t_{21}a_{12}s_{22}+s_{22}a_{12}t_{21})+M^{*^{-1}}(t_{21}b_{21}s_{22}+s_{22}b_{21}t_{21})\\
 &=&M^{*^{-1}}(t_{21}a_{12}s_{22}).
 \end{eqnarray*}
 Then we get $$t_{21}cs_{22}+s_{22}ct_{21}=t_{21}a_{12}s_{22},$$
  that is $$t_{21}c_{12}s_{22}+s_{22}c_{22}t_{21}=t_{21}a_{12}s_{22},$$
  which implies $c_{12}=a_{12}$ and $c_{22}=0$.

 \end{proof}

  \begin{lemma} \label{lemma111221}For any $a_{11}\in \mathcal {R}_{11}$, $b_{12}\in \mathcal {R}_{12}$, and $c_{21}\in \mathcal {R}_{21}$, we have

  (i) $M(a_{11}+b_{12}+c_{21})=M(a_{11})+M(b_{12})+M(c_{21})$;

  (ii) $M^{*^{-1}}(a_{11}+b_{12}+c_{21})=M^{*^{-1}}(a_{11})+M^{*^{-1}}(b_{12})+M^{*^{-1}}(c_{21})$.

 \end{lemma}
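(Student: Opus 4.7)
The plan is to follow exactly the template of the preceding Peirce-decomposition lemmas. Pick $d=d_{11}+d_{12}+d_{21}+d_{22}\in\mathcal{R}$ with $M(d)=M(a_{11})+M(b_{12})+M(c_{21})$; the goal is to show $d_{11}=a_{11}$, $d_{12}=b_{12}$, $d_{21}=c_{21}$, and $d_{22}=0$ by applying Lemma \ref{add} with carefully chosen test elements $t,s$.

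First I would bootstrap to a three-term version of Lemma \ref{add}. By the preceding lemma on sums in $\mathcal{R}_{12}+\mathcal{R}_{21}$, we have $M(b_{12}+c_{21})=M(b_{12})+M(c_{21})$, so $M(d)=M(a_{11})+M(b_{12}+c_{21})$; two applications of Lemma \ref{add} then give, for every $t,s\in\mathcal{R}$,
\[
M^{*^{-1}}(tds+sdt)=M^{*^{-1}}(ta_{11}s+sa_{11}t)+M^{*^{-1}}(tb_{12}s+sb_{12}t)+M^{*^{-1}}(tc_{21}s+sc_{21}t).
\]

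To extract $d_{11}$ and $d_{21}$, I would specialize to $t=t_{12}\in\mathcal{R}_{12}$ and $s=s_{11}\in\mathcal{R}_{11}$. A Peirce calculation kills every piece except $s_{11}a_{11}t_{12}\in\mathcal{R}_{12}$ and $t_{12}c_{21}s_{11}\in\mathcal{R}_{11}$ (the $b_{12}$-contribution vanishes entirely). Combining these two terms under $M^{*^{-1}}$ via Lemma \ref{lemmaiiij}(ii) and comparing with the Peirce expansion of the left-hand side gives
\[
t_{12}d_{21}s_{11}+s_{11}d_{11}t_{12}=t_{12}c_{21}s_{11}+s_{11}a_{11}t_{12}.
\]
Separating the $\mathcal{R}_{11}$ and $\mathcal{R}_{12}$ summands and invoking Lemma \ref{lu} yields $d_{11}=a_{11}$ and $d_{21}=c_{21}$. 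Symmetrically, choosing $t=t_{21}\in\mathcal{R}_{21}$ and $s=s_{22}\in\mathcal{R}_{22}$ collapses the three-term formula so that only the $b_{12}$-contribution survives, giving $t_{21}ds_{22}+s_{22}dt_{21}=t_{21}b_{12}s_{22}$; expanding the left-hand side as $t_{21}d_{12}s_{22}+s_{22}d_{22}t_{21}$ and separating corners ($\mathcal{R}_{22}$ versus $\mathcal{R}_{21}$) forces $d_{12}=b_{12}$ and, by Lemma \ref{lu}, $d_{22}=0$. Thus $d=a_{11}+b_{12}+c_{21}$, establishing (i); part (ii) then follows at once from Lemma \ref{inverse}.

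The main obstacle is entirely bookkeeping: for each candidate $(t,s)$ one must verify that the surviving contributions on the right-hand side of the three-term formula either sit in a single Peirce block or sit in blocks for which an \emph{already proved} partial-additivity lemma (here Lemma \ref{lemmaiiij} and the preceding $\mathcal{R}_{12}+\mathcal{R}_{21}$ lemma) legitimizes pulling the $M^{*^{-1}}$'s through a sum, and simultaneously that the Peirce expansion of the left-hand side isolates enough components of $d$ to be killed off by Lemma \ref{lu}. Once the pairs $(t_{12},s_{11})$ and $(t_{21},s_{22})$ are identified, the rest is mechanical.
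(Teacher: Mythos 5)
Your proof is correct and follows the same general template as the paper's (choose $d$ with $M(d)=M(a_{11})+M(b_{12})+M(c_{21})$, test against Peirce-homogeneous pairs via Lemma \ref{add}, and identify the components of $d$ using Lemma \ref{lu}), but the details differ in two ways worth recording. First, you bootstrap a genuine three-term version of Lemma \ref{add} by splitting off $M(b_{12}+c_{21})=M(b_{12})+M(c_{21})$ (the preceding $\mathcal{R}_{12}+\mathcal{R}_{21}$ lemma) and applying Lemma \ref{add} twice; the paper instead records the two regroupings $M(d)=M(a_{11}+b_{12})+M(c_{21})$ and $M(d)=M(a_{11}+c_{21})+M(b_{12})$ (via Lemmas \ref{lemmaiiij} and \ref{lemmaiiji}) and applies Lemma \ref{add} once to each. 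Both devices are legitimate. Second, your test pairs are $(t_{12},s_{11})$ and $(t_{21},s_{22})$ rather than the paper's $(t_{21},s_{12})$ and $(t_{22},s_{12})$, and this is more than cosmetic: the paper's two computations pin down only $d_{11}=a_{11}$, $d_{22}=0$, and $d_{21}=c_{21}$, and its printed proof never verifies $d_{12}=b_{12}$ at all, whereas your second test pair does exactly that. So your argument is in fact the more complete of the two. One minor bookkeeping point: from identities such as $t_{12}(d_{21}-c_{21})s_{11}=0$ for all $t_{12},s_{11}$, you should note that setting $s_{11}=e_1$ (available since $\mathcal{R}$ is unital) reduces the statement to the hypothesis of Lemma \ref{lu}(i); this is the same step the paper leaves implicit throughout.
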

 \begin{proof}
 We choose  $d=d_{11}+d_{12}+d_{21}+d_{22}\in \mathcal {R}$ such that $M(d)=M(a_{11})+M(b_{12})+M(c_{21})$. By Lemma \ref{lemmaiiij} and Lemma \ref{lemmaiiji}, we have

\begin{equation}\label{ee}
M(d)=M(a_{11}+b_{12})+M(c_{21})
\end{equation}
and
\begin{equation}\label{ff}
M(d)=M(a_{11}+c_{21})+M(b_{12}).
\end{equation}
For any $t_{21}\in \mathcal {R}_{21}$ and $s_{12}\in \mathcal {R}_{12}$, by Lemma \ref{add} and
equation (\ref{ee}), we have
\begin{eqnarray*}
& &M^{*^{-1}}(t_{21}ds_{12}+s_{12}dt_{21})\\
&=&M(^{*^{-1}}t_{21}(a_{11}+b_{12})s_{12}+s_{12}(a_{11}+b_{12})t_{21})+M^{*^{-1}}(t_{21}c_{21}s_{12}+s_{12}c_{21}t_{21})\\
&=&M^{*^{-1}}(t_{21}a_{11}s_{12}),
\end{eqnarray*}
which yields that
$$
t_{21}ds_{12}+s_{12}dt_{21}=t_{21}a_{11}s_{12}.$$
Furthermore, $$t_{21}d_{11}s_{12}+s_{12}d_{22}t_{21}=t_{21}a_{11}s_{12}.$$ Therefore $d_{11}=a_{11}$ and $d_{22}=0$.

  In order to complete the proof, we need to show that $d_{22}=0$ and $d_{21}=c_{21}$.
 For arbitrary
$t_{22}\in \mathcal {R}_{22}$ and $s_{12}\in \mathcal {R}_{12}$, using Lemma \ref{add} and equality
(\ref{ff}), we have
\begin{eqnarray*}
& &M^{*^{-1}}(t_{22}ds_{12}+s_{12}dt_{22})\\
&=&M^{*^{-1}}(t_{22}(a_{11}+c_{21})s_{12}+s_{12}(a_{11}+c_{21})t_{22})+M^{*^{-1}}(t_{22}b_{12}s_{12}+s_{12}b_{12}t_{22})\\
&=&M^{*^{-1}}(t_{22}c_{21}s_{12}).
\end{eqnarray*}
This leads to
 $$t_{22}ds_{12}+s_{12}dt_{22}=t_{22}c_{21}s_{12}.$$
 Then we get
$$t_{22}d_{21}s_{11}+s_{12}d_{22}t_{22}=t_{22}c_{21}s_{12}.$$
It follows form Lemma \ref{lu} that $d_{22}=0$ and $d_{21}=c_{21}$.
\end{proof}

 Similarly, we have the following
 \begin{lemma}\label{lemma122122}
 For any $a_{12}\in \mathcal {R}_{12}$, $b_{21}\in \mathcal {R}_{21}$, and $c_{22}\in \mathcal {R}_{22}$, we have

  (i) $M(a_{12}+b_{21}+c_{22})=M(a_{12})+M(b_{21})+M(c_{22})$;

  (ii) $M^{*^{-1}}(a_{12}+b_{21}+c_{22})=M^{*^{-1}}(a_{12})+M^{*^{-1}}(b_{21})+M^{*^{-1}}(c_{22})$.
  \end{lemma}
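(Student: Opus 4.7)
The plan is to follow the template of Lemma~\ref{lemma111221} almost verbatim, since the present statement is its dual with $e_1$ and $e_2$ interchanged. First I will pick $d=d_{11}+d_{12}+d_{21}+d_{22}\in\mathcal{R}$ with $M(d)=M(a_{12})+M(b_{21})+M(c_{22})$, and then regroup the right-hand side in two complementary ways. Lemma~\ref{lemmaiiji} (applied with $i=2$, $j=1$, so that $a_{ii}=c_{22}$ and $b_{ji}=a_{12}$) gives
\begin{equation*}
M(d)=M(a_{12}+c_{22})+M(b_{21}),
\end{equation*}
while Lemma~\ref{lemmaiiij} (applied with $i=2$, $j=1$, so that $a_{ii}=c_{22}$ and $b_{ij}=b_{21}$) gives
\begin{equation*}
M(d)=M(a_{12})+M(b_{21}+c_{22}).
\end{equation*}
These two identities play exactly the roles of equations~(\ref{ee}) and~(\ref{ff}) in the proof of Lemma~\ref{lemma111221}.

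Next, for each regrouping I will apply Lemma~\ref{add} with a specific pair $(t,s)$ chosen so that the Peirce arithmetic kills as many terms as possible. Using the second regrouping with $t=t_{12}\in\mathcal{R}_{12}$ and $s=s_{21}\in\mathcal{R}_{21}$, every contribution coming from $a_{12}$ and from $b_{21}$ vanishes by index multiplication, leaving only $t_{12}c_{22}s_{21}\in\mathcal{R}_{11}$ on the right-hand side. On the left, $t_{12}ds_{21}+s_{21}dt_{12}$ collapses to $t_{12}d_{22}s_{21}+s_{21}d_{11}t_{12}$, with the two summands living in the disjoint blocks $\mathcal{R}_{11}$ and $\mathcal{R}_{22}$. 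Matching the two blocks separately and invoking Lemma~\ref{lu} simultaneously yields $d_{22}=c_{22}$ and $d_{11}=0$.

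For the remaining two components I will use the first regrouping with $t=t_{11}\in\mathcal{R}_{11}$ and $s=s_{22}\in\mathcal{R}_{22}$. Now two terms survive, namely $t_{11}a_{12}s_{22}\in\mathcal{R}_{12}$ and $s_{22}b_{21}t_{11}\in\mathcal{R}_{21}$, so Lemma~\ref{add} initially returns a sum of two separate $M^{*^{-1}}$ values. The key trick is to merge them into a single $M^{*^{-1}}$ by applying the previously established additivity on $\mathcal{R}_{12}+\mathcal{R}_{21}$ (the unlabelled lemma between Lemma~\ref{lemma1122} and Lemma~\ref{lemma111221}). After this merge, the left-hand side $t_{11}d_{12}s_{22}+s_{22}d_{21}t_{11}$ splits cleanly by blocks, and Lemma~\ref{lu} then delivers $d_{12}=a_{12}$ and $d_{21}=b_{21}$. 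Combining the four identifications gives $d=a_{12}+b_{21}+c_{22}$, which is part~(i); part~(ii) follows by running the identical argument for the Jordan elementary pair $(M^{*^{-1}},M^{-1})$ supplied by Lemma~\ref{inverse}.

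The main obstacle I anticipate is simply keeping the Peirce bookkeeping honest: each choice of $(t,s)$ must be verified to annihilate exactly the intended terms, and the second choice requires spotting that the two surviving $M^{*^{-1}}$ values can be recombined through the $\mathcal{R}_{12}+\mathcal{R}_{21}$ additivity lemma rather than analyzed in isolation. Once those two choices of $(t,s)$ are fixed, every remaining manipulation reduces to a mechanical application of Lemma~\ref{add} and Lemma~\ref{lu}.
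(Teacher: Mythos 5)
Your proof is correct and is essentially the argument the paper intends when it states Lemma~\ref{lemma122122} follows ``similarly'' to Lemma~\ref{lemma111221}: the same two regroupings via Lemmas~\ref{lemmaiiij} and~\ref{lemmaiiji}, the same use of Lemma~\ref{add} with well-chosen Peirce test pairs, and the same merging of two $M^{*^{-1}}$ values through the $\mathcal{R}_{12}+\mathcal{R}_{21}$ additivity lemma that the paper itself employs in Lemma~\ref{lemma11122122}. If anything, your choice of test pairs is tidier than the paper's own template, since it pins down all four Peirce components of $d$ (the printed proof of Lemma~\ref{lemma111221} never explicitly verifies the $\mathcal{R}_{12}$ component).
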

 \begin{lemma}\label{lemma11122122}
  For any $a_{11}\in \mathcal {R}_{11}$, $b_{12}\in \mathcal {R}_{12}$, $c_{21}\in \mathcal {R}_{21}$, and $d_{22}\in \mathcal {R}_{22}$, the following hold.

(i)
$M(a_{11}+b_{12}+c_{21}+d_{22})=M(a_{11})+M(b_{12})+M(c_{21})+M(d_{22})$;

(ii)
$M^{*^{-1}}(a_{11}+b_{12}+c_{21}+d_{22})=M^{*^{-1}}(a_{11})+M^{*^{-1}}(b_{12})+M^{*^{-1}}(c_{21})+M^{*^{-1}}(d_{22})$.
 \end{lemma}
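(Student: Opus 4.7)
The plan is to follow the same template used in Lemmas \ref{lemma111221} and \ref{lemma122122}: pick a preimage $f=f_{11}+f_{12}+f_{21}+f_{22}\in\mathcal{R}$ with $M(f)=M(a_{11})+M(b_{12})+M(c_{21})+M(d_{22})$, then pin down each Peirce component $f_{ij}$ by probing $M^{*^{-1}}(tfs+sft)$ with well-chosen $t,s$ and invoking Lemma \ref{add} together with Lemma \ref{lu}. The crucial bookkeeping step is to rewrite the three-term sum on the right using Lemma \ref{lemma111221} as $M(a_{11}+b_{12}+c_{21})+M(d_{22})$, which reduces the additivity question to splitting one genuine sum at a time.

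First I would probe with $t_{22}\in\mathcal{R}_{22}$ and $s_{11}\in\mathcal{R}_{11}$. Using the multiplication table, $t_{22}fs_{11}+s_{11}ft_{22}$ collapses to $t_{22}f_{21}s_{11}+s_{11}f_{12}t_{22}$, while Lemma \ref{add} together with the grouping $M(f)=M(a_{11}+b_{12}+c_{21})+M(d_{22})$ rewrites $M^{*^{-1}}(t_{22}fs_{11}+s_{11}ft_{22})$ as $M^{*^{-1}}(t_{22}c_{21}s_{11}+s_{11}b_{12}t_{22})$ (the $a_{11}$ and $d_{22}$ contributions annihilate because $t_{22}a_{11}=0$, $a_{11}t_{22}=0$, $t_{22}d_{22}s_{11}=0$, $s_{11}d_{22}t_{22}=0$). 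Injectivity of $M^{*^{-1}}$ and Lemma \ref{lu} then give $f_{12}=b_{12}$ and $f_{21}=c_{21}$.

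Next I would probe with $t_{12}\in\mathcal{R}_{12}$ and $s_{21}\in\mathcal{R}_{21}$. Now $t_{12}fs_{21}+s_{21}ft_{12}$ reduces to $t_{12}f_{22}s_{21}+s_{21}f_{11}t_{12}$, and the same grouping plus the Peirce identities (note $t_{12}b_{12}=0$, $t_{12}c_{21}\in\mathcal{R}_{11}$ so $t_{12}c_{21}s_{21}=0$, and symmetric cancellations for $s_{21}$) force the right-hand side to equal $t_{12}d_{22}s_{21}+s_{21}a_{11}t_{12}$. Splitting into the $\mathcal{R}_{11}$ and $\mathcal{R}_{22}$ components and applying Lemma \ref{lu} yields $f_{22}=d_{22}$ and $f_{11}=a_{11}$. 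Thus $f=a_{11}+b_{12}+c_{21}+d_{22}$, proving (i).

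Statement (ii) follows immediately by applying (i) to the pair $(M^{*^{-1}},M^{-1})$, which is again a Jordan triple elementary map by Lemma \ref{inverse}. I do not anticipate any real obstacle: the argument is purely a Peirce-decomposition computation, and every annihilation we need is forced by the multiplication table $e_ie_j=\delta_{ij}e_i$. The only point requiring care is keeping track of which of the three mixed products vanish against $t_{22},s_{11}$ versus $t_{12},s_{21}$, so that exactly two summands survive on each side and Lemma \ref{lu} can separate them.
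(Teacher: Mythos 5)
Your proof is correct and follows essentially the same template as the paper's: the only real difference is that you regroup the four summands as $(a_{11}+b_{12}+c_{21})+d_{22}$ via Lemma \ref{lemma111221}, whereas the paper groups them as $(a_{11}+d_{22})+(b_{12}+c_{21})$ using the two-term lemmas, and you probe with $(t_{22},s_{11})$ and $(t_{12},s_{21})$ rather than $(t_{11},s_{12})$ and $(t_{22},s_{21})$. One small point to make explicit: in your second probe the surviving terms $s_{21}a_{11}t_{12}\in\mathcal{R}_{22}$ and $t_{12}d_{22}s_{21}\in\mathcal{R}_{11}$ come from the two \emph{different} $M^{*^{-1}}$ summands produced by Lemma \ref{add}, so recombining them into a single $M^{*^{-1}}$ of their sum needs an explicit appeal to Lemma \ref{lemma1122}(ii) (in the first probe no such step is needed, since both surviving terms sit inside the same summand).
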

 \begin{proof}
 We pick $f=f_{11}+f_{12}+f_{21}+f_{22}\in \mathcal {R}$ such that $$M(f)=M(a_{11})+M(b_{12})+M(c_{21})+M(d_{22})=M(a_{11}+d_{22})+M(b_{12}+c_{21}).$$

 For any $t_{11}\in \mathcal {R}_{11}$ and $s_{12}\in \mathcal {R}_{12}$, we obtain
 \begin{eqnarray*}
 & &M^{*^{-1}}(t_{11}fs_{12}+s_{12}ft_{11})\\
 &=&M^{*^{-1}}(t_{11}(a_{11}+d_{22})s_{12}+s_{12}(a_{11}+d_{22})t_{11})\\
 & &+M^{*^{-1}}(t_{11}(b_{12}+c_{21})s_{12}+s_{12}(b_{12}+c_{21})t_{11}))\\
 &=&M^{*^{-1}}(t_{11}a_{11}s_{12})+M^{*^{-1}}(s_{12}c_{21}t_{11}) \\
 &=&M^{*^{-1}}(t_{11}a_{11}s_{12}+s_{12}c_{21}t_{11}).
 \end{eqnarray*}
 Note that in the last equality we apply Lemma \ref{lemma111221}. Then we get $t_{11}fs_{12}+s_{12}ft_{11}=t_{11}a_{11}s_{12}+s_{12}c_{21}t_{11}$. Furthermore, we have
 $$t_{11}f_{11}s_{12}+s_{12}f_{21}t_{11}=t_{11}a_{11}s_{12}+s_{12}c_{21}t_{11}.$$
  It follows form Lemma \ref{lu} that $f_{11}=a_{11}$ and $f_{21}=c_{21}$.

 We now show $f_{22}=d_{22}$ and $f_{12}=b_{12}$.   For any $t_{22}\in \mathcal {R}_{22}$ and $s_{21}\in \mathcal {R}_{21}$, we consider
 \begin{eqnarray*}
 & &M^{*^{-1}}(t_{22}fs_{21}+s_{21}ft_{22})\\
 &=&M^{*^{-1}}(t_{22}(a_{11}+d_{22})s_{21}+s_{21}(a_{11}+d_{22})t_{22})\\
 & &+M^{*^{-1}}(t_{22}(b_{12}+c_{21})s_{21}+s_{21}(b_{12}+c_{21})t_{22})\\
 &=&M^{*^{-1}}(t_{22}d_{22}s_{21})+M^{*^{-1}}(s_{21}b_{12}t_{22})\\
 &=&M^{*^{-1}}(t_{22}d_{22}s_{21}+s_{21}b_{12}t_{22}).
 \end{eqnarray*}
 Consequently, $$t_{22}fs_{21}+s_{21}ft_{22}=t_{22}d_{22}s_{21}+s_{21}b_{12}t_{22},$$ this implies that $t_{22}f_{22}s_{21}+s_{21}f_{12}t_{22}=t_{22}d_{22}s_{21}+s_{21}b_{12}t_{22}$. Thus $f_{22}=d_{22}$ and $f_{12}=b_{12}$.
 \end{proof}

  \noindent \textbf{Proof of Theorem \ref{theorem}} We first show that $M$ is additive. Let $a=a_{11}+a_{12}+a_{21}+a_{22}$ and $b=b_{11}+b_{12}+b_{21}+b_{22}$ be two arbitrary elements of $\mathcal {R}$. We have
 \begin{eqnarray*}
 & &M(a+b)\\
 &=&M((a_{11}+b_{11})+(a_{12}+b_{12})+(a_{21}+b_{21})+(a_{22}+b_{22}))\\
 &=&M(a_{11}+b_{11})+M(a_{12}+b_{12})+M(a_{21}+b_{21})+M(a_{22}+b_{22})\\
 &=&M(a_{11})+M(b_{11})+M(a_{12})+M(b_{12})+M(a_{21})+M(b_{21})+M(a_{22})+M(b_{22})\\
 &=&M(a_{11}+a_{12}+a_{21}+a_{22})+M(b_{11}+b_{12}+b_{21}+b_{22})\\
 &=&M(a)+M(b).
 \end{eqnarray*}
 That is, $M$ is additive.

 We now turn to prove that $M^*$ is additive. For any $x, y\in \mathcal {R}^{\prime}$, there exist $c=c_{11}+c_{12}+c_{21}+c_{22}$ and $d=d_{11}+d_{12}+d_{21}+d_{22}$ in $\mathcal {R}$ such that $c=M^*(x)+M^*(y)$ and $d=M^*(x+y)$.

 For arbitrary $t_{ij}\in \mathcal {R}_{ij}$ and $s_{kl}\in \mathcal {R}_{kl}$ ($1\leq i, j, k, l\leq 2$), using the additivity of $M$, we compute
 \begin{eqnarray*}
 & &M(t_{ij}cs_{kl}+s_{kl}ct_{ij})\\
 &=&M(t_{ij}(M^*(x)+M^*(y))s_{kl}+s_{kl}(M^*(x)+M^*(y))t_{ij})\\
 &=&M(t_{ij}M^*(x)s_{kl})+M(t_{ij}M^*(y)s_{kl})+M(s_{kl}M^*(x)t_{ij})+M(s_{kl}M^*(y)t_{ij})\\
 &=&M(t_{ij}M^*(x)s_{kl}+s_{kl}M^*(x)t_{ij})+M(t_{ij}M^*(y)s_{kl}+s_{kl}M^*(y)t_{ij})\\
 &=&M(t_{ij})xM(s_{kl})+M(s_{kl})xM(t_{ij})+M(t_{ij})yM(s_{kl})+M(s_{kl})yM(t_{ij})\\
 &=&M(t_{ij})(x+y)M(s_{kl})+M(s_{kl})(x+y)M(t_{ij})\\
 &=&M(t_{ij}M^*(x+y)s_{kl}+s_{kl}M^*(x+y)t_{ij})\\
 &=&M(t_{ij}ds_{kl}+s_{kl}dt_{ij}).
 \end{eqnarray*}
Therefore,
\begin{equation}\label{ii}
t_{ij}cs_{kl}+s_{kl}ct_{ij}=t_{ij}ds_{kl}+s_{kl}dt_{ij}.
\end{equation}

Letting $i=j=k=1$ and $l=2$ in equality (\ref{ii}), we get
 $$t_{11}c_{11}s_{12}+s_{12}c_{21}t_{11}=t_{11}d_{11}s_{12}+s_{12}d_{21}t_{11}.$$

It follows that $c_{11}=d_{11}$ and $c_{21}=d_{21}$.

We now set $i=j=k=2$ and $l=1$ in equality (\ref{ii}), then we obtain
$$t_{22}cs_{21}+s_{21}ct_{22}=t_{22}ds_{21}+s_{21}dt_{22},$$
that is,
$$t_{22}c_{22}s_{21}+s_{21}c_{12}t_{22}=t_{22}d_{22}s_{21}+s_{21}d_{12}t_{22}.$$
By Lemma \ref{lu}, we get $c_{22}=d_{22}$ and $c_{12}=d_{12}$.
Therefore, $c=d$, i.e., $M^*(x+y)=M^*(x)+M^*(y)$, which completes the proof.

For the case of Jordan triple elementary maps on prime rings we have the
following result.
\begin{corollary}
Let $\mathcal {R}$ be a $2$-torsion free unital prime  ring containing a
nontrivial idempotent $e_1$, and $\mathcal {R}^{\prime }$ be an
arbitrary ring.    Let $M\colon {\mathcal
R}\rightarrow {\mathcal R}^{\prime }$ and $M^*\colon {\mathcal
R}^{\prime }\rightarrow {\mathcal R}$ be two surjective maps such
that
\begin{displaymath}
 \left\{ \begin{array}{ll}
 M(aM^*(x)b+bM^*(x)a)=M(a)xM(b)+M(b)xM(a),\\
 M^*(xM(a)y+yM(a)x)=M^*(x)aM^*(y)+M^*(y)aM^*(x)
\end{array}\right.
\end{displaymath}
 for all $a, b\in \mathcal {R}$, $x, y\in {\mathcal R}^{\prime }$. Then both $M$ and $M^*$ are additive.
\end{corollary}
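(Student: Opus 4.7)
The plan is to derive the corollary directly from Theorem~\ref{theorem} by checking that every prime ring with a nontrivial idempotent automatically satisfies property~(P). Once (P) is in place, all hypotheses of Theorem~\ref{theorem} are met verbatim, and the additivity of $M$ and $M^*$ follows immediately from it. Thus the entire proof reduces to a single, essentially trivial observation about prime rings.

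To verify~(P), fix $i, j, k \in \{1, 2\}$ and suppose first that $e_i a e_j \mathcal{R} e_k = \{0\}$. Because $e_1$ is a \emph{nontrivial} idempotent, neither $e_1$ nor $e_2 = 1 - e_1$ vanishes, so $e_k \ne 0$. Reading the displayed equality as $u \mathcal{R} v = 0$ with $u = e_i a e_j$ and $v = e_k \ne 0$, primeness of $\mathcal{R}$ forces $u = 0$, that is, $e_i a e_j = 0$. The dual implication is handled symmetrically: if $e_k \mathcal{R} e_i a e_j = \{0\}$, apply primeness with $u = e_k \ne 0$ and $v = e_i a e_j$ to obtain $e_i a e_j = 0$ once more. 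This is the only place where primeness is actually used.

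Having established~(P), we invoke Theorem~\ref{theorem} with the given data: $\mathcal{R}$ is a $2$-torsion free unital ring containing the nontrivial idempotent $e_1$ and satisfying~(P), and $(M, M^*)$ is a surjective pair on $\mathcal{R} \times \mathcal{R}'$ satisfying the two Jordan triple elementary identities. The theorem applies verbatim and yields that both $M$ and $M^*$ are additive. There is no genuine obstacle here; the only point worth stressing is that nontriviality of $e_1$ is exactly what makes the primeness-to-(P) reduction automatic, since it guarantees that the element $e_k$ that one wishes to cancel on one side of $u \mathcal{R} v = 0$ is nonzero.
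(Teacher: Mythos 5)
Your proposal is correct and follows exactly the paper's route: the paper likewise reduces the corollary to Theorem~\ref{theorem} by observing that primeness (together with $e_1, e_2 \ne 0$) immediately yields condition~(P). You merely spell out the one-line verification that the paper leaves as ``easy to check.''
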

\begin{proof}
Since $\mathcal {R}$ is prime, it is easy to check that condition
(P) of Theorem \ref{theorem} holds true. Now the proof goes
directly.
\end{proof}

We complete this note by considering Jordan triple elementary maps on
standard operator algebras.

\begin{corollary}
Let $\mathcal {A}$ be a unital standard operator algebra on a Banach space
of dimension greater than $1$, and $\mathcal {R}$ an arbitrary ring.
Suppose that $M\colon {\mathcal A}\rightarrow {\mathcal R}$ and
$M^*\colon {\mathcal R}\rightarrow {\mathcal A}$ are surjective maps
such that
\begin{displaymath}
 \left\{ \begin{array}{ll}
 M(aM^*(x)b+bM^*(x)a)=M(a)xM(b)+M(b)xM(a),\\
 M^*(xM(a)y+yM(a)x)=M^*(x)aM^*(y)+M^*(y)aM^*(x)
\end{array}\right.
\end{displaymath}
 for all $a, b\in \mathcal {A}$, $x, y\in {\mathcal R}$. Then both $M$ and $M^*$ are additive.
\end{corollary}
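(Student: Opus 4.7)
The plan is to reduce this corollary to the previous one by verifying that every unital standard operator algebra $\mathcal{A}$ on a Banach space $X$ of dimension greater than $1$ is a $2$-torsion free prime ring with a nontrivial idempotent. Once this is established, the hypotheses of the previous corollary are satisfied (with $\mathcal{R}$ there playing the role of $\mathcal{A}$ here, and $\mathcal{R}'$ there being our arbitrary ring $\mathcal{R}$), so $M$ and $M^*$ are automatically additive.

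First I would dispose of the easy items. Any standard operator algebra on a Banach space is an algebra over $\mathbb{R}$ or $\mathbb{C}$, hence automatically $2$-torsion free. For the nontrivial idempotent, since $\dim X > 1$ I can pick $x \in X$ and $f \in X^\ast$ with $f(x) = 1$, and the rank-one operator $e_1 = x \otimes f$ (defined by $e_1(z) = f(z)x$) is a nontrivial idempotent lying in $\mathcal{A}$ because $\mathcal{A}$ contains every finite rank operator. Unitality is given.

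The main step is to verify that $\mathcal{A}$ is a prime ring. Suppose $A\mathcal{A}B = \{0\}$ with $A \neq 0$; I would pick $y \in X$ with $Ay \neq 0$, and for an arbitrary $z \in X$ and $g \in X^\ast$ use the finite rank operator $T = y \otimes g \in \mathcal{A}$ to compute
\[
0 = (ATB)(z) = A\bigl(g(Bz)\, y\bigr) = g(Bz)\, Ay.
\]
Since $Ay \neq 0$, this forces $g(Bz) = 0$ for every $g \in X^\ast$ and every $z \in X$, hence $B = 0$ by Hahn--Banach. Thus $\mathcal{A}$ is prime.

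With primeness in hand, the hypotheses of the preceding corollary on prime rings are fulfilled by $\mathcal{A}$, and applying it to the pair $(M, M^\ast)$ yields the additivity of both maps. The only real obstacle is the primeness argument, but the density of finite rank operators inside a standard operator algebra makes it immediate; no further delicate computation is needed, since all the additivity machinery has already been carried out in Theorem~\ref{theorem}.
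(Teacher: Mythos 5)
Your proposal is correct and follows exactly the route the paper intends: the paper states this corollary without proof, implicitly reducing it to the prime-ring corollary, and you supply the standard verification that a unital standard operator algebra on a Banach space of dimension greater than $1$ is a $2$-torsion free prime ring containing a nontrivial idempotent. The primeness argument via rank-one operators and Hahn--Banach, and the idempotent $e_1 = x\otimes f$ with $f(x)=1$, are precisely the expected details.
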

 \bibliographystyle{amsplain}

\end{document}